\date{}
\renewcommand{\uppercasenonmath}[1]{}
\theoremstyle{plain}
\newtheorem{theorem}{Theorem}[section]
\newtheorem{proposition}[theorem]{Proposition}
\newtheorem{lemma}[theorem]{Lemma}
\newtheorem{corollary}[theorem]{Corollary}
\newtheorem{example}[theorem]{Example}
\newtheorem*{open question}{Open Question}
\newtheorem{definition}[theorem]{Definition}
\theoremstyle{definition}
\newtheorem*{acknowledgement}{Acknowledgement}
\theoremstyle{remark}
\newcommand{\C}{\mathcal{C}}
\newcommand{\Id}{\mathrm{Id}}
\def\p{\frak p}
\def\m{\frak m}
\def\Hom{{\rm Hom}}
\def\Ext{{\rm Ext}}
\def\Ker{{\rm Ker}}
\def\Im{{\rm Im}}
\def\Coker{{\rm Coker}}
\def\Max{{\rm Max}}
\def\Spec{{\rm Spec}}
\def\Max{{\rm Max}}
\def\Id{{\rm Id}}
\begin{document}
\begin{center}
{\large  \bf The $S$-global dimensions of commutative rings}

\vspace{0.5cm}   Xiaolei Zhang$^{a}$,\ Wei Qi$^{b}$
%\bigskip

{\footnotesize  Department of Basic Courses, Chengdu Aeronautic Polytechnic, Chengdu 610100, China\\
b.\ School of Mathematical Sciences, Sichuan Normal University, Chengdu 610068, China\\
%c. \ School of Mathematics, ABa Teachers University, Wenchuan 623002, China

E-mail: zxlrghj@163.com\\}
\end{center}
%\begin{figure}[b]
%\rule[-2.5truemm]{5cm}{0.1truemm}\\[2mm]
%{\small }
%\end{figure}

%\begin{figure}[b]
%\rule[-2.5truemm]{5cm}{0.1truemm}\\[2mm]
%{\small }
%\end{figure}
\bigskip
\centerline { \bf  Abstract}
\bigskip
\leftskip10truemm \rightskip10truemm \noindent

Let $R$ be a commutative ring with identity and $S$  a multiplicative subset of $R$. First, we introduce and study the  $S$-projective dimensions and $S$-injective dimensions of $R$-modules, and then explore the $S$-global dimension $S$-gl.dim$(R)$ of a commutative ring $R$ which is defined to be  the supremum of $S$-projective dimensions of all $R$-modules. Finally, we investigated the $S$-global dimension of factor rings and  polynomial rings.
\vbox to 0.3cm{}\\
{\it Key Words:}    $S$-projective dimensions, $S$-injective dimensions,  $S$-global dimensions, polynomial rings.\\
{\it 2010 Mathematics Subject Classification:}   13D05, 13D07.

\leftskip0truemm \rightskip0truemm
\bigskip
%\section { \bf Introduction    }
%\bigskip

Throughout this article, $R$ always is a commutative ring with identity and $S$ always is a multiplicative subset of $R$, that is, $1\in S$ and $s_1s_2\in S$ for any $s_1\in S, s_2\in S$. In 2002,  Anderson and Dumitrescu \cite{ad02} defined $S$-Noetherian rings $R$ for which any ideal of $R$ is $S$-finite. Recall from \cite{ad02} that an $R$-module $M$ is called $S$-finite provided that $sM\subseteq F$ for some $s\in S$ and some finitely generated submodule $F$ of $M$. An $R$-module $T$ is called uniformly $S$-torsion if $sT=0$ for some $s\in S$ (see \cite{zwz21}). So  an $R$-module $M$ is  $S$-finite if and only if $M/F$ is uniformly $S$-torsion for some finitely generated submodule $F$ of $M$. The idea derived from  uniformly $S$-torsion modules  is deserved to be further investigated.

 In \cite{zwz21-p}, the author of this paper introduced the class of $S$-projective modules $P$ for which the functor $\Hom_R(P,-)$ preserves $S$-exact sequences. The class of  $S$-projective modules can be seen as a ``uniform'' generalization of that of projective modules, since an $R$-module $P$ is $S$-projective if and only if  $\Ext_R^1(P,M)$ is  uniformly $S$-torsion for any  $R$-module $M$ (see \cite[Theorem 2.5]{zwz21-p}). The class of  $S$-projective modules owns the following $S$-hereditary property: let $0\rightarrow A\xrightarrow{f} B\xrightarrow{g} C\rightarrow 0$ be an  $S$-exact sequence, if $B$ and $C$ are  $S$-projective so is $A$ (see \cite[Proposition 2.8]{zwz21-p}). So it is worth to study the  $S$-analogue of projective dimensions of $R$-modules. Similarly, By the discussion of $S$-injective modules in \cite{QKWCZ21}, we can study  the  $S$-analogue of injective dimensions of $R$-modules. Together these, an  $S$-analogue of global dimensions of commutative rings can also be introduced and studied.

In this article, we define the $S$-projective dimension $S$-$pd_R(M)$ (resp., $S$-injective dimension $S$-$id_R(M)$) of an  $R$-module $M$ to be the length of the shortest $S$-projective (resp.,  $S$-injective) $S$-resolution of $M$.  We characterize  $S$-projective dimensions (resp.,  $S$-injective) of $R$-modules using the  uniform torsion property of  the ``Ext''  functors  in Proposition  \ref{s-projective d} (resp., Proposition  \ref{s-inj-d}). Besides,  we obtain local characterizations of projective dimensions and injective dimensions of $R$-modules in Corollary \ref{wgld-swgld}.  The $S$-global dimension $S$-gl.dim$(R)$ of a commutative ring $R$  is defined to be  the supremum of $S$-projective dimensions of all $R$-modules.  We find that $S$-global dimensions of commutative rings is also the supremum of $S$-injective dimensions of all $R$-modules. A new characterization of global dimensions is given in Corollary \ref{wgld-swgld}. $S$-semisimple rings are firstly introduced in \cite{zwz21-p}  for which  any free  $R$-module is $S$-semisimple. By \cite[Theorem 3.11]{zwz21}, a ring $R$ is $S$-semisimple if and only if all $R$-modules are $S$-projective (resp.,  $S$-injective). So $S$-semisimple  are exactly commutative rings with $S$-global dimension equal to $0$ (see Corollary \ref{s-vn-ext-char}).  In the final section, we investigate the $S$-global dimensions of factor rings and  polynomial rings and show that $S$-gl.dim$(R[x])=S$-gl.dim$(R)+1$ (see Theorem \ref{s-wgd-poly}).

\section{Preliminaries}

Recall from \cite{zwz21}, an $R$-module $T$ is called a uniformly $S$-torsion module  provided that there exists an element $s\in S$ such that $sT=0$.
An $R$-sequence  $M\xrightarrow{f} N\xrightarrow{g} L$ is called  \emph{$S$-exact} (at $N$) provided that there is an element $s\in S$ such that $s\Ker(g)\subseteq \Im(f)$ and $s\Im(f)\subseteq \Ker(g)$. We say a long $R$-sequence $...\rightarrow A_{n-1}\xrightarrow{f_n} A_{n}\xrightarrow{f_{n+1}} A_{n+1}\rightarrow...$ is $S$-exact, if for any $n$ there is an element $s\in S$ such that $s\Ker(f_{n+1})\subseteq \Im(f_n)$ and $s\Im(f_n)\subseteq \Ker(f_{n+1})$. An $S$-exact sequence $0\rightarrow A\rightarrow B\rightarrow C\rightarrow 0$ is called a short $S$-exact sequence. Let $\xi: 0\rightarrow A\xrightarrow{f} B\xrightarrow{g} C\rightarrow 0$ be an $S$-short exact sequence. Then $\xi$ is said to be $S$-split provided that there is  $s\in S$ and $R$-homomorphism $f':B\rightarrow A$ such that $f'(f(a))=sa$ for any $a\in A$, that is, $f'\circ f=s\Id_A$ (see \cite[Definition 2.1]{zwz21-p}).

An $R$-homomorphism $f:M\rightarrow N$ is an \emph{$S$-monomorphism}  $($resp.,   \emph{$S$-epimorphism}, \emph{$S$-isomorphism}$)$ provided $0\rightarrow M\xrightarrow{f} N$   $($resp., $M\xrightarrow{f} N\rightarrow 0$, $0\rightarrow M\xrightarrow{f} N\rightarrow 0$ $)$ is   $S$-exact.
It is easy to verify an  $R$-homomorphism $f:M\rightarrow N$ is an $S$-monomorphism $($resp., $S$-epimorphism, $S$-isomorphism$)$ if and only if $\Ker(f)$ $($resp., $\Coker(f)$, both $\Ker(f)$ and $\Coker(f)$$)$ is a  uniformly $S$-torsion module. Let $R$ be a ring and  $S$ a multiplicative subset of $R$. Suppose $M$ and $N$ are $R$-modules. We say $M$ is $S$-isomorphic to $N$ if there exists an $S$-isomorphism $f:M\rightarrow N$. A family $\C$  of $R$-modules  is said to be closed under $S$-isomorphisms if $M$ is $S$-isomorphic to $N$ and $M$ is in $\C$, then $N$ is  also in  $\C$. It follows from \cite[Proposition 1.1]{z21-swd} that the existence of $S$-isomorphisms of two $R$-modules is actually an equivalence relation.

\begin{lemma}\cite[Proposition 1.1]{z21-swd}\label{s-iso-inv}
Let $R$ be a ring and $S$ a multiplicative subset of $R$. Suppose there is an $S$-isomorphism $f:M\rightarrow N$ for  $R$-modules  $M$ and $N$. Then there is an $S$-isomorphism $g:N\rightarrow M$. Moreover, there is $s\in S$ such that $f\circ g=s\Id_N$ and $g\circ f=s\Id_M$.
\end{lemma}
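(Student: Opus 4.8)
The plan is to build the desired $g$ explicitly from the two uniform torsion witnesses attached to $f$. Since $f$ is an $S$-isomorphism, both $\Ker(f)$ and $\Coker(f)$ are uniformly $S$-torsion, so I would first extract $s_1,s_2\in S$ with $s_1\Ker(f)=0$ and $s_2\Coker(f)=0$. The second condition says precisely that $s_2N\subseteq\Im(f)$, so for each $n\in N$ I can choose some $m_n\in M$ with $f(m_n)=s_2n$. The choice of $m_n$ is ambiguous only up to $\Ker(f)$, and here the first condition intervenes: multiplying by $s_1$ kills that ambiguity, since any two preimages of $s_2n$ differ by a kernel element. This motivates setting $g(n):=s_1m_n$.

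Next I would verify that $g$ is a well-defined $R$-homomorphism. Well-definedness follows from $s_1\Ker(f)=0$: if $f(m)=f(m')=s_2n$, then $m-m'\in\Ker(f)$, hence $s_1m=s_1m'$. Additivity and $R$-linearity are checked the same way, comparing the chosen preimage $m_{n+n'}$ (resp.\ $m_{rn}$) with $m_n+m_{n'}$ (resp.\ $rm_n$): these agree after multiplication by $s_1$ precisely because their images under $f$ coincide.

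The core of the argument is the two composite identities, which I expect to fall out by direct computation once $g$ is in hand. Put $s:=s_1s_2\in S$. For $f\circ g$, given $n\in N$ we have $f(g(n))=f(s_1m_n)=s_1f(m_n)=s_1s_2n=sn$, so $f\circ g=s\Id_N$. For $g\circ f$, given $m\in M$ observe that $s_2m$ is itself a legitimate preimage of $s_2f(m)$ under $f$, so by well-definedness $g(f(m))=s_1s_2m=sm$, giving $g\circ f=s\Id_M$.

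Finally I would deduce that $g$ is an $S$-isomorphism from these identities, using the characterization (noted just before the statement) that a map is an $S$-isomorphism iff its kernel and cokernel are uniformly $S$-torsion. From $f\circ g=s\Id_N$ we get $\Ker(g)\subseteq\Ker(s\Id_N)$, so $s\Ker(g)=0$; from $g\circ f=s\Id_M$ we get $sM=\Im(s\Id_M)\subseteq\Im(g)$, so $s\Coker(g)=0$. The only genuine subtlety in the whole argument is the well-definedness step, where the interplay of the two witnesses $s_1$ (for the kernel) and $s_2$ (for the cokernel) is essential; everything else is a routine verification.
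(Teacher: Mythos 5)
Your proof is correct. The paper does not prove this lemma itself but cites it from \cite[Proposition 1.1]{z21-swd}, and your argument --- extracting $s_1\Ker(f)=0$ and $s_2N\subseteq\Im(f)$, defining $g(n)=s_1m_n$ for a chosen preimage $m_n$ of $s_2n$, checking well-definedness via $s_1\Ker(f)=0$, and taking $s=s_1s_2$ --- is exactly the standard construction used in that reference, with all the verifications (linearity, the two composite identities, and the uniform $S$-torsion of $\Ker(g)$ and $\Coker(g)$) carried out correctly.
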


The following result says that a short $S$-exact sequence induces a long $S$-exact sequence by the functor ``Ext'' as the classical case.

\begin{lemma}\label{s-iso-ext}
Let $R$ be a ring and  $S$ a multiplicative subset of $R$. Let $L$, $M$ and $N$ be $R$-modules. If $f:M\rightarrow N$ is an $S$-isomorphism, then $\Ext^{n}_R(L,f):\Ext^{n}_R(L,M)\rightarrow \Ext^{n}_R(L,N)$  and $\Ext^{n}_R(f,L):\Ext^{n}_R(N,L)\rightarrow \Ext^{n}_R(M,L)$ are all $S$-isomorphisms for any $n\geq 0$.
\end{lemma}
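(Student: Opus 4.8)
The plan is to reduce everything to the elementary observation that an $R$-homomorphism which admits a ``two-sided inverse up to multiplication by a single element of $S$'' is automatically an $S$-isomorphism, and then to produce exactly such data on the Ext level by functoriality. First I would invoke Lemma \ref{s-iso-inv} to obtain an $S$-isomorphism $g:N\rightarrow M$ together with a single element $s\in S$ satisfying $f\circ g=s\Id_N$ and $g\circ f=s\Id_M$. The crucial feature is that one and the same $s$ works simultaneously on both sides, and this is precisely the structure that the Ext functors will inherit.

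Next I would apply the covariant functor $\Ext^n_R(L,-)$ to these two identities. Writing $\phi=\Ext^n_R(L,f)$ and $\psi=\Ext^n_R(L,g)$, functoriality gives $\phi\circ\psi=\Ext^n_R(L,f\circ g)=\Ext^n_R(L,s\Id_N)$ and $\psi\circ\phi=\Ext^n_R(L,g\circ f)=\Ext^n_R(L,s\Id_M)$. The key structural fact I would use is that $\Ext^n_R(L,-)$ is $R$-linear, so that the endomorphism induced by multiplication by $s$ on a module is again multiplication by $s$ on the corresponding Ext group; thus $\phi\circ\psi=s\,\Id_{\Ext^n_R(L,N)}$ and $\psi\circ\phi=s\,\Id_{\Ext^n_R(L,M)}$. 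From $\psi\circ\phi=s\,\Id$ it follows that if $\phi(x)=0$ then $sx=\psi(\phi(x))=0$, so $s\Ker(\phi)=0$; from $\phi\circ\psi=s\,\Id$ it follows that $sy=\phi(\psi(y))\in\Im(\phi)$ for every $y$, so $s\,\Coker(\phi)=0$. Hence both $\Ker(\phi)$ and $\Coker(\phi)$ are uniformly $S$-torsion and $\phi$ is an $S$-isomorphism.

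The contravariant case is identical once one reverses the order of composition: applying $\Ext^n_R(-,L)$ to $f\circ g=s\Id_N$ and $g\circ f=s\Id_M$ yields $\Ext^n_R(g,L)\circ\Ext^n_R(f,L)=s\,\Id_{\Ext^n_R(N,L)}$ and $\Ext^n_R(f,L)\circ\Ext^n_R(g,L)=s\,\Id_{\Ext^n_R(M,L)}$, and the same kernel/cokernel computation shows that $\Ext^n_R(f,L)$ is an $S$-isomorphism. Note that the case $n=0$, where $\Ext^0_R=\Hom_R$, is subsumed by the same argument and needs no separate treatment, so the conclusion holds uniformly for all $n\geq 0$.

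The one step that genuinely requires justification—and which I regard as the main (if mild) obstacle—is the compatibility of the Ext functors with scalar multiplication, namely $\Ext^n_R(L,s\Id_M)=s\,\Id_{\Ext^n_R(L,M)}$. This asserts that the $R$-module structure on $\Ext^n_R(L,M)$ induced by the action on $M$ coincides with the functorial action of the endomorphism $s\Id_M$; it follows from the biadditivity of Ext together with the $R$-linearity of the connecting maps, and can be checked directly on a projective resolution of $L$ (or an injective resolution of $M$). Granting this, the entire argument is purely formal and completely parallel in the two variables.
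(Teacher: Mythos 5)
Your proof is correct, but it takes a genuinely different route from the paper's. The paper factors $f$ through its image, writing down the two exact sequences $0\rightarrow \Ker(f)\rightarrow M\rightarrow \Im(f)\rightarrow 0$ and $0\rightarrow \Im(f)\rightarrow N\rightarrow \Coker(f)\rightarrow 0$, and then runs the long exact Ext sequences, quoting \cite[Lemma 4.2]{QKWCZ21} to see that all Ext groups with a uniformly $S$-torsion argument are themselves uniformly $S$-torsion; the two maps $\Ext^n_R(L,M)\rightarrow\Ext^n_R(L,\Im(f))\rightarrow\Ext^n_R(L,N)$ are then $S$-isomorphisms and so is their composite $\Ext^n_R(L,f)$. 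You instead use Lemma \ref{s-iso-inv} to extract a quasi-inverse $g$ with $f\circ g=s\Id_N$, $g\circ f=s\Id_M$ for a single $s\in S$, and push these identities through the functor, reducing everything to the $R$-linearity of $\Ext^n_R$ (which, as you note, is checked on a projective resolution of $L$ or an injective resolution of the second argument, since pre- or post-composing with $s\Id$ is multiplication by $s$ at the level of Hom complexes, hence on cohomology). Your argument buys several things: it avoids the long exact sequences and the external citation entirely; it shows that the \emph{same} $s$ kills both $\Ker$ and $\Coker$ of the induced map, which the paper's composite-of-two-$S$-isomorphisms argument does not give directly; and it actually proves the more general statement that any additive $R$-linear functor preserves $S$-isomorphisms, of which both variances of Ext (and Tor, for that matter) are instances. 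What the paper's approach buys in exchange is that it needs only additivity of Ext plus the torsion lemma it already requires for Theorem \ref{s-iso-tor} immediately afterwards, so it stays within machinery the paper must develop anyway. Both proofs are complete; yours is the more economical and more general one.
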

\begin{proof} We only show  $\Ext^{n}_R(L,f):\Ext^{n}_R(L,M)\rightarrow \Ext^{n}_R(L,N)$ is an $S$-isomorphism for any $n\geq 0$ since the other one is similar. Consider the exact sequences: $0\rightarrow \Ker(f)\rightarrow M\xrightarrow{\pi_{\Im(f)}} \Im(f)\rightarrow 0$ and  $0\rightarrow \Im(f)\xrightarrow{i_{\Im(f)}} N\rightarrow \Coker(f)\rightarrow 0$ with $\Ker(f)$ and $\Coker(f$ uniformly $S$-torsion. Then there are long exact sequences $$\Ext^{n}_R(L,\Ker(f))\rightarrow \Ext^{n}_R(L,M)\xrightarrow{\Ext^{n}_R(L,\pi_{\Im(f)})} \Ext^{n}_R(L,\Im(f))\rightarrow \Ext^{n+1}_R(L,\Ker(f))$$ and $$\Ext^{n-1}_R(L,\Coker(f))\rightarrow \Ext^{n}_R(L,\Im(f))\xrightarrow{\Ext^{n}_R(L,i_{\Im(f)})} \Ext^{n}_R(L,N)\rightarrow \Ext^{n}_R(L,\Coker(f)).$$
Since $\Ext^{n}_R(L,\Ker(f))$, $ \Ext^{n+1}_R(L,\Ker(f))$, $\Ext^{n-1}_R(L,\Coker(f))$  and $\Ext^{n}_R(L,\Coker(f))$ are all uniformly $S$-torsion by \cite[Lemma 4.2]{QKWCZ21}, we have  $$\Ext^{n}_R(L,f):\Ext^{n}_R(L,M)\xrightarrow{\Ext^{n}_R(L,\pi_{\Im(f)})} \Ext^{n}_R(L,\Im(f))\xrightarrow{\Ext^{n}_R(L,i_{\Im(f)})} \Ext^{n}_R(L,N)$$ is an $S$-isomorphism.
\end{proof}

\begin{theorem}\label{s-iso-tor}
Let $R$ be a ring, $S$ a multiplicative subset of $R$ and $M$ and $N$ $R$-modules. Suppose $0\rightarrow A\xrightarrow{f} B\xrightarrow{g} C\rightarrow 0$ is an $S$-exact sequence of $R$-modules. Then for any $n\geq 1$ there is an $R$-homomorphism $\delta_n:\Ext^{n-1}_R(M,C)\rightarrow \Ext^{n}_R(M,A)$ such that  the induced sequences
\begin{center}
$0\rightarrow \Hom_R(M,A)\rightarrow \Hom_R(M,B)\rightarrow \Hom_R(M,C)\rightarrow \Ext^{1}_R(M,A)\rightarrow \cdots \rightarrow\Ext^{n-1}_R(M,B)\rightarrow \Ext^{n-1}_R(M,C)\xrightarrow{\delta_n} \Ext^{n}_R(M,A)\rightarrow \Ext^{n}_R(M,B)\rightarrow \cdots$
\end{center}
and
\begin{center}
$0\rightarrow \Hom_R(C,N)\rightarrow \Hom_R(B,N)\rightarrow \Hom_R(A,N)\rightarrow \Ext^{1}_R(C,N)\rightarrow \cdots \rightarrow
 \Ext^{n-1}_R(B,N)\rightarrow \Ext^{n-1}_R(A,N)\xrightarrow{\delta_n} \Ext^{n}_R(C,N)\rightarrow  \Ext^{n}_R(B,N)\rightarrow \cdots$
 \end{center}
 are $S$-exact.
\end{theorem}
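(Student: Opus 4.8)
The plan is to reduce the statement to the classical long exact sequences of $\Ext$ by replacing the given $S$-exact sequence with a genuine short exact sequence that is $S$-isomorphic to it. Write $\iota:\Ker(g)\hookrightarrow B$ for the inclusion and $\bar{g}:B\to \Im(g)$ for the corestriction of $g$; then $0\to \Ker(g)\xrightarrow{\iota} B\xrightarrow{\bar{g}} \Im(g)\to 0$ is an honest short exact sequence. First I would show that $A$ and $C$ are $S$-isomorphic to $\Ker(g)$ and $\Im(g)$ respectively. Since $\Coker(g)=C/\Im(g)$ is uniformly $S$-torsion, the inclusion $i:\Im(g)\hookrightarrow C$ is an $S$-isomorphism. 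For the other one, $S$-exactness at $B$ gives $s\in S$ with $s\Im(f)\subseteq \Ker(g)$, so $sf$ corestricts to an $R$-homomorphism $sf:A\to \Ker(g)$; its kernel is killed by an element of $S$ because $\Ker(f)$ is uniformly $S$-torsion, and from $s\Ker(g)\subseteq \Im(f)$ one gets $s^{2}\Ker(g)\subseteq s\Im(f)=\Im(sf)$, so its cokernel is uniformly $S$-torsion as well. Hence $sf:A\to\Ker(g)$ is an $S$-isomorphism.

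Next I would feed the honest sequence $0\to \Ker(g)\to B\to \Im(g)\to 0$ into the classical long exact sequences for $\Hom_R(M,-)$ and $\Hom_R(-,N)$, obtaining genuine long exact sequences with connecting maps $\partial_n$. Writing $E^n_X=\Ext^n_R(M,X)$, Lemma \ref{s-iso-ext} shows that $sf$ and $i$ induce $S$-isomorphisms $u_n:E^n_A\to E^n_{\Ker(g)}$ and $v_n:E^n_{\Im(g)}\to E^n_C$ (and, contravariantly, $\Ext^n_R(C,N)\to\Ext^n_R(\Im(g),N)$ and $\Ext^n_R(\Ker(g),N)\to \Ext^n_R(A,N)$), the case $n=0$ recording the $\Hom$ terms. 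Composing $\partial_n$ with these $S$-isomorphisms and with their $S$-inverses supplied by Lemma \ref{s-iso-inv} defines the desired maps $\delta_n$. Moreover the first two maps of each transferred sequence agree, up to multiplication by an element of $S$, with $\Ext^n_R(M,f)$ and $\Ext^n_R(M,g)$ (resp. $\Ext^n_R(g,N)$ and $\Ext^n_R(f,N)$), since $\iota\circ(sf)=sf$ and $i\circ\bar{g}=g$.

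Finally I would verify $S$-exactness term by term. At a spot $E^n_A\xrightarrow{\alpha_n}E^n_B\xrightarrow{\beta_n}E^n_C$ with $\alpha_n=\Ext^n_R(M,\iota)\circ u_n$ and $\beta_n=v_n\circ\Ext^n_R(M,\bar{g})$, one has $\beta_n\circ\alpha_n=0$ outright from the classical complex, and for the reverse inclusion a short chase works: if $\beta_n(x)=0$ then $\Ext^n_R(M,\bar{g})(x)$ lies in the uniformly $S$-torsion kernel of $v_n$, so some $s_1x$ lands in $\Ker(\Ext^n_R(M,\bar{g}))=\Im(\Ext^n_R(M,\iota))$ by classical exactness, and applying an $S$-inverse of $u_n$ pulls this back into $\Im(\alpha_n)$ after multiplying by a further $s_2\in S$; thus $s_1s_2\Ker(\beta_n)\subseteq \Im(\alpha_n)$. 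The spots involving $\delta_n$ and the terminal $S$-monomorphism at the $\Hom$ ends are treated identically. I expect the only genuine bookkeeping obstacle to be tracking how these multipliers in $S$ accumulate across the inserted $S$-isomorphisms and their $S$-inverses, and confirming that a single $s\in S$ can be chosen to witness $S$-exactness at each term; everything else is the classical long exact sequence transported along Lemmas \ref{s-iso-inv} and \ref{s-iso-ext}.
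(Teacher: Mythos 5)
Your proposal is correct and takes essentially the same route as the paper's proof: both replace the $S$-exact sequence by the honest short exact sequence $0\rightarrow \Ker(g)\rightarrow B\rightarrow \Im(g)\rightarrow 0$, apply the classical long exact sequences, and transport them along the $S$-isomorphisms $A\rightarrow\Ker(g)$ and $\Im(g)\rightarrow C$ using Lemmas \ref{s-iso-inv} and \ref{s-iso-ext}, defining $\delta_n$ as the composite of the classical connecting homomorphism with these $S$-isomorphisms. If anything you are more careful than the paper, which asserts the $S$-isomorphism $t_1:A\rightarrow\Ker(g)$ and the final term-by-term $S$-exactness without proof, whereas you construct it explicitly as the corestriction of $sf$ and carry out the diagram chase tracking the multipliers in $S$.
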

\begin{proof}
We only show the first sequence is $S$-exact  since the other one is similar. Since the sequence $0\rightarrow A\xrightarrow{f} B\xrightarrow{g} C\rightarrow 0$ is $S$-exact at $B$. There is an exact sequence $0\rightarrow \Ker(g)\xrightarrow{i_{\Ker(g)}} B\xrightarrow{\pi_{\Im(g)}} \Im(g)\rightarrow 0$. . So There is a long exact sequence of $R$-modules:
 \begin{center}
$0\rightarrow \Hom_R(M,\Ker(g))\rightarrow \Hom_R(M,B)\rightarrow \Hom_R(M,\Im(g))\rightarrow \Ext^{1}_R(M,\Ker(g))\rightarrow \cdots \rightarrow\Ext^{n-1}_R(M,B)\rightarrow \Ext^{n-1}_R(M,\Im(g))\xrightarrow{\delta'_n} \Ext^{n}_R(M,\Ker(g))\rightarrow \Ext^{n}_R(M,B)\rightarrow \cdots$
 \end{center}
Note that there are $S$-isomorphisms $t_1:A\rightarrow \Ker(g)$, $t'_1:\Ker(g)\rightarrow A$, $t_2: \Im(g)\rightarrow C$ and $t'_2: C\rightarrow \Im(g)$ by Lemma \ref{s-iso-inv}. So, by Lemma \ref{s-iso-ext},  %$\Ext^{n}_R(M,t_1): \Ext^{n}_R(M,A)\rightarrow\Ext^{n}_R(M,\Ker(g))$,
$\Ext^{n}_R(M,t'_1): \Ext^{n}_R(M,\Ker(g))\rightarrow\Ext^{n}_R(M,A)$
%,  $\Ext^{n}_R(M,t_2): \Ext^{n}_R(M,\Im(g))\rightarrow\Ext^{n}_R(M,C)$
and  $\Ext^{n}_R(M,t'_2): \Ext^{n}_R(M,C)\rightarrow\Ext^{n}_R(M,\Im(g))$  are $S$-isomorphisms for any $n\geq 0$. Setting $\delta_n= \Ext^{n}_R(M,t'_1)\circ\delta'_n\circ\Ext^{n}_R(M,t'_2)$, we have an $S$-exact sequence
\begin{center}
$0\rightarrow \Hom_R(M,A)\rightarrow \Hom_R(M,B)\rightarrow \Hom_R(M,C)\rightarrow \Ext^{1}_R(M,A)\rightarrow \cdots \rightarrow\Ext^{n-1}_R(M,B)\rightarrow \Ext^{n-1}_R(M,C)\xrightarrow{\delta_n} \Ext^{n}_R(M,A)\rightarrow \Ext^{n}_R(M,B)\rightarrow \cdots$
\end{center}
\end{proof}

Recall from \cite[Definition 3.1]{zwz21-p} that an $R$-module $P$ is called $S$-projective provided that the induced sequence $$0\rightarrow \Hom_R(P,A)\rightarrow \Hom_R(P,B)\rightarrow \Hom_R(P,C)\rightarrow 0$$ is $S$-exact for any $S$-exact sequence $0\rightarrow A\rightarrow B\rightarrow C\rightarrow 0$. And recall from \cite[Definition 4.1]{QKWCZ21} that an $R$-module $E$ is called $S$-injective provided that the induced sequence
 $$0\rightarrow \Hom_R(C,E)\rightarrow \Hom_R(B,E)\rightarrow \Hom_R(A,E)\rightarrow 0$$
 is $S$-exact for any $S$-exact sequence $0\rightarrow A\rightarrow B\rightarrow C\rightarrow 0$. Following from \cite[Theorem 3.2]{zwz21}, an  $R$-module $P$ is projective if and only if  $\Ext_R^1(P,M)$ is  uniformly $S$-torsion for any  $R$-module $M$, if and only if every $S$-short exact sequence $0\rightarrow A\xrightarrow{f} B\xrightarrow{g} P\rightarrow 0$ is $S$-split. Similarly, an $R$-module $E$  is $S$-injective if and only if $\Ext_R^1(M,E)$ is  uniformly $S$-torsion for any $R$-module $M$, if and only  every $S$-short exact sequence $0\rightarrow E\xrightarrow{f} A\xrightarrow{g} B\rightarrow 0$ is $S$-split by \cite[Theorem 4.3]{QKWCZ21} and \cite[Proposition 2.3]{zwz21-p}. Following  from Theorem \ref{s-iso-tor}, we have the following result.

\begin{corollary}\label{big-Tor}
Let $R$ be a ring, $S$ a multiplicative subset of $R$ and $M$ and $N$  $R$-modules. Suppose $0\rightarrow A\xrightarrow{f} B\xrightarrow{g} C\rightarrow 0$ is an $S$-exact sequence of $R$-modules.

 \begin{enumerate}
    \item If $B$ is $S$-projective, then $\Ext^R_{n}(C,N)$ is $S$-isomorphic to  $\Ext_{n+1}^R(A,N)$ for any $n\geq 0$.
    \item  If $B$ is $S$-injective, then $\Ext^R_{n}(M,A)$ is $S$-isomorphic to  $\Ext_{n+1}^R(M,C)$ for any $n\geq 0$.
 \end{enumerate}
\end{corollary}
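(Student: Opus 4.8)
The plan is to read off both conclusions from the two long $S$-exact sequences furnished by Theorem \ref{s-iso-tor}, the point being that an $S$-projective (resp.\ $S$-injective) middle term $B$ makes all the intermediate ``$B$-terms'' of these sequences uniformly $S$-torsion, so that the connecting homomorphisms degenerate into $S$-isomorphisms. Before doing this I would record the elementary principle that drives everything: if $X\xrightarrow{u}Y\xrightarrow{\delta}Z\xrightarrow{v}W$ is $S$-exact and both $X$ and $W$ are uniformly $S$-torsion, then $\delta$ is an $S$-isomorphism. Indeed, $\Im u$ is a quotient of $X$, hence uniformly $S$-torsion, and $S$-exactness at $Y$ gives $s\Ker\delta\subseteq\Im u$ for some $s\in S$, so $\Ker\delta$ is uniformly $S$-torsion; likewise $\Im v\subseteq W$ is uniformly $S$-torsion, whence $s''Z\subseteq\Ker v$ for some $s''\in S$, and combined with $S$-exactness at $Z$ this forces $Z/\Im\delta$ to be uniformly $S$-torsion. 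Thus both $\Ker\delta$ and $\Coker\delta$ are uniformly $S$-torsion, which is precisely $S$-isomorphism.

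The crucial preliminary step is to upgrade the defining property of $S$-projectivity, which a priori only controls $\Ext^1$, to every positive degree. Given $S$-projective $B$, I would pick an honestly exact sequence $0\to K\to F\to B\to 0$ with $F$ free. Since $F$ is free, hence $S$-projective, and $B$ is $S$-projective, the $S$-hereditary property \cite[Proposition 2.8]{zwz21-p} shows $K$ is $S$-projective. As $F$ is projective, the usual long exact sequence supplies genuine isomorphisms $\Ext^{n+1}_R(B,N)\cong\Ext^{n}_R(K,N)$ for $n\geq 1$; iterating through the successive syzygies (each again $S$-projective by the same property) reduces $\Ext^{n}_R(B,N)$, for every fixed $n\geq 1$, to an $\Ext^1$ of an $S$-projective module, which is uniformly $S$-torsion by \cite[Theorem 2.5]{zwz21-p}. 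Hence $\Ext^{n}_R(B,N)$ is uniformly $S$-torsion for all $n\geq 1$. Dually, for $S$-injective $B$ I would use an injective copresentation together with the injective analogue of the hereditary property from \cite{QKWCZ21} to conclude that $\Ext^{n}_R(M,B)$ is uniformly $S$-torsion for all $n\geq 1$.

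To finish, for (1) I would apply the elementary principle to the contravariant long $S$-exact sequence of Theorem \ref{s-iso-tor}: each connecting map there is flanked by two consecutive terms of the form $\Ext^{k}_R(B,N)$, both uniformly $S$-torsion once $k\geq 1$, so the connecting map is an $S$-isomorphism and identifies the relevant $A$-term with the $C$-term one degree higher. Part (2) is identical, using instead the covariant sequence of Theorem \ref{s-iso-tor} and the $S$-injectivity of $B$ to kill the terms $\Ext^{k}_R(M,B)$. The symmetry of the $S$-isomorphism relation, Lemma \ref{s-iso-inv}, then lets one phrase the conclusion in either order.

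I expect the genuine obstacle to be the middle paragraph. The cited characterizations only guarantee uniform $S$-torsion of $\Ext^1$, so the bootstrapping to higher $\Ext$ through the $S$-hereditary property and classical dimension shifting is where the real content lies, and care is needed to keep a single torsion bound $s\in S$ under control across the finitely many shifting steps and across the two flanking terms of each connecting map. A secondary bookkeeping point is the degree range: since the very first connecting map is flanked by the non-torsion term $\Hom_R(B,N)$, the argument yields $S$-isomorphisms only once both flanking $B$-terms sit in positive degree, that is, from degree one onward.
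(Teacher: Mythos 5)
Your proposal is correct and follows essentially the same route as the paper, which derives the corollary directly from the long $S$-exact sequences of Theorem \ref{s-iso-tor} by observing that an $S$-projective (resp.\ $S$-injective) middle term makes the flanking $\Ext$-terms in positive degree uniformly $S$-torsion; your syzygy bootstrapping merely makes explicit the higher-degree torsion the paper imports from \cite[Theorem 2.5]{zwz21-p}, and does so without circular reliance on Proposition \ref{s-projective d}. Your closing caveat is well taken: the connecting maps are $S$-isomorphisms only from degree one onward, so the paper's ``$n\geq 0$'' (together with its transposed sub/superscripts and swapped $A$/$C$ indices, apparently copied from a Tor-version) is a misprint, and what you prove --- $\Ext^{n}_R(A,N)$ $S$-isomorphic to $\Ext^{n+1}_R(C,N)$ for $n\geq 1$, and dually --- is exactly the form the paper actually uses later, e.g.\ in Proposition \ref{s-projective d}.
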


\section{On the $S$-projective dimensions and $S$-injective dimensions of modules}

In this section we mainly introduced the the $S$-versions of projective dimensions and injective dimensions of $R$-modules.

\begin{definition}\label{s-prd }
Let $R$ be a ring, $S$ a multiplicative subset of $R$  and $M$ an $R$-module. We write $S$-$pd_R(M)\leq n$  $(S$-$pd$ abbreviates  \emph{$S$-projective dimension}$)$ if there exists an $S$-exact sequence of $R$-modules
$$ 0 \rightarrow F_n \rightarrow ...\rightarrow F_1\rightarrow F_0\rightarrow M\rightarrow 0   \ \ \ \ \ \ \ \ \ \ \ \ \ \ \ \ \ \ \ \ \ \ \ \ \ \ \ \ \ \ \ \ \ \ \ \ \ \ \ (\diamondsuit)$$
where each $F_i$ is  $S$-projective for $i=0,...,n$. The $S$-exact sequence $(\diamondsuit)$ is said to be an $S$-projective $S$-resolution of length $n$ of $M$. If such finite  $S$-projective  $S$-resolution does not exist, then we say $S$-$pd_R(M)=\infty$; otherwise,  define $S$-$pd_R(M)=n$ if $n$ is the length of the shortest $S$-projective $S$-resolution of $M$.
\end{definition}\label{def-wML}

\begin{definition}\label{s-injd }
Let $R$ be a ring, $S$ a multiplicative subset of $R$  and $M$ an $R$-module. We write $S$-$id_R(M)\leq n$  $(S$-$id$ abbreviates  \emph{$S$-injective dimension}$)$ if there exists an $S$-exact sequence of $R$-modules
$$ 0 \rightarrow M \rightarrow E_0\rightarrow E_1 ...\rightarrow E_{n-1}\rightarrow E_{n}\rightarrow 0   \ \ \ \ \ \ \ \ \ \ \ \ \ \ \ \ \ \ \ \ \ \ \ \ \ \ \ \ \ \ \ \ \ \ \ \ \ \ \ (\star)$$
where each $E_i$ is  $S$-injective for $i=0,...,n$. The $S$-exact sequence $(\star)$ is said to be an $S$-injective $S$-resolution of length $n$ of $M$. If such finite  $S$-injective  $S$-resolution does not exist, then we say $S$-$id_R(M)=\infty$; otherwise,  define $S$-$id_R(M)=n$ if $n$ is the length of the shortest $S$-injective $S$-resolution of $M$.
\end{definition}\label{def-wML}

Trivially, $S$-$pd_R(M)\leq pd_R(M)$ and  $S$-$id_R(M)\leq id_R(M)$. And if $S$ is composed of units, then $S$-$pd_R(M)=pd_R(M)$.   It is also obvious that an $R$-module $M$ is $S$-projective if and only if $S$-$pd_R(M)= 0$, and is $S$-injective if and only if $S$-$id_R(M)= 0$.

\begin{lemma}\label{s-iso-pd}
Let $R$ be a ring, $S$ a multiplicative subset of $R$. If $A$ is $S$-isomorphic to $B$, then $S$-$pd_R(A) = S$-$pd_R(B)$ and $S$-$id_R(A) = S$-$id_R(B)$.
\end{lemma}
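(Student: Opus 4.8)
The plan is to reduce everything to a single modification principle: if $0 \to F_n \to \cdots \to F_0 \xrightarrow{\epsilon} B \to 0$ is $S$-exact and $g\colon B \to A$ is an $S$-isomorphism, then replacing the augmentation $\epsilon$ by $g\circ\epsilon$ yields an $S$-exact sequence $0 \to F_n \to \cdots \to F_0 \xrightarrow{g\epsilon} A \to 0$. Since this operation leaves the modules $F_i$ untouched, it turns an $S$-projective $S$-resolution of $B$ of length $n$ into one of $A$ of the same length $n$. Granting this principle, I would argue as follows. By Lemma \ref{s-iso-inv}, the $S$-isomorphism $f\colon A \to B$ comes with an $S$-isomorphism $g\colon B \to A$ and an $s\in S$ with $g\circ f = s\Id_A$ and $f\circ g = s\Id_B$; in particular both $f$ and $g$ are $S$-isomorphisms. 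If $S$-$pd_R(B)=\infty$ there is nothing to prove, so assume $S$-$pd_R(B)=n$ and pick a length-$n$ $S$-projective $S$-resolution of $B$. The modification principle (applied with $g$) produces a length-$n$ $S$-projective $S$-resolution of $A$, whence $S$-$pd_R(A)\le n$. Interchanging the roles of $A$ and $B$ (using $f$ in place of $g$) gives the reverse inequality, so $S$-$pd_R(A)=S$-$pd_R(B)$, the two being simultaneously infinite as well.

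The injective case is dual: starting from an $S$-injective $S$-resolution $0 \to A \xrightarrow{\eta} E_0 \to \cdots \to E_n \to 0$ together with the $S$-isomorphism $g\colon B \to A$, I would replace the coaugmentation $\eta$ by $\eta\circ g\colon B \to E_0$ and check that $0 \to B \xrightarrow{\eta g} E_0 \to \cdots \to E_n \to 0$ remains $S$-exact, again keeping the same modules $E_i$ and hence the same length. The same two-sided comparison then gives $S$-$id_R(A)=S$-$id_R(B)$.

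The only real work is verifying the modification principle, which is where I expect the main obstacle to lie, so let me indicate the bookkeeping. $S$-exactness at the $F_i$ with $i\ge 1$ is unchanged because those maps are untouched. At the new terminal object $A$, the map $g\epsilon$ is a composite of two $S$-epimorphisms ($\epsilon$ because the original sequence is $S$-exact at $B$, and $g$ because it is an $S$-isomorphism), hence an $S$-epimorphism, so $\Coker(g\epsilon)$ is uniformly $S$-torsion. The delicate point is $S$-exactness at $F_0$, i.e. comparing $\Ker(g\epsilon)$ with $\Im(d_1)$ for $d_1\colon F_1 \to F_0$. Here one uses that $\Ker(g)$ is uniformly $S$-torsion: choosing $s'\in S$ with $s'\Ker(g)=0$, any $x$ with $g\epsilon(x)=0$ satisfies $\epsilon(s'x)=s'\epsilon(x)=0$, so $s'\Ker(g\epsilon)\subseteq\Ker(\epsilon)$; combined with the original relation $s\Ker(\epsilon)\subseteq\Im(d_1)$ this yields $ss'\Ker(g\epsilon)\subseteq\Im(d_1)$, while $\Im(d_1)\subseteq\Ker(\epsilon)\subseteq\Ker(g\epsilon)$ gives the other inclusion. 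The injective case runs symmetrically, the dual delicate point being that $s''\Im(\eta)\subseteq\Im(\eta g)$ whenever $s''\Coker(g)=0$, using that $g$ is an $S$-epimorphism. Assembling these element-wise bounds, with a common $s\in S$ absorbing the finitely many auxiliary elements, completes the verification.
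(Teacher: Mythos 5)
Your proposal is correct and follows essentially the same route as the paper: the paper also composes the augmentation of an $S$-projective resolution of $A$ with the $S$-isomorphism to get a resolution of $B$ of the same length, then invokes Lemma \ref{s-iso-inv} for the symmetric inequality (and treats the injective case dually). You additionally verify the $S$-exactness of the modified sequence, which the paper asserts without proof; just note that in the $S$-exact setting one only has $s\,\Im(d_1)\subseteq\Ker(\epsilon)$ rather than $\Im(d_1)\subseteq\Ker(\epsilon)$, a harmless adjustment your final ``common $s$'' remark already absorbs.
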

\begin{proof} We only prove  $S$-$pd_R(A)=S$-$pd_R(B)$ as the $S$-injective dimension is similar. Let $f: A\rightarrow B$ be an $S$-isomorphism. If $...\rightarrow P_n \rightarrow ...\rightarrow P_1\rightarrow P_0\xrightarrow{g} A\rightarrow 0$ is an $S$-projective resolution of $A$, then $...\rightarrow P_n \rightarrow ...\rightarrow P_1\rightarrow P_0\xrightarrow{f\circ g} B\rightarrow 0$ is an $S$-projective resolution of $B$. So $S$-$pd_R(A)\geq S$-$pd_R(B)$. Similarly we have $S$-$pd_R(B)\geq S$-$pd_R(A)$ by Proposition \ref{s-iso-inv}.
\end{proof}

\begin{proposition}\label{s-projective d}
Let $R$ be a ring and $S$ a multiplicative subset of $R$. The following statements are equivalent for an $R$-module $M$:
\begin{enumerate}
    \item $S$-$pd_R(M)\leq n$;
    \item $\Ext_R^{n+k}(M, N)$ is uniformly $S$-torsion for all  $R$-modules $N$ and all $k > 0$;
    \item $\Ext_R^{n+1}(M, N)$ is uniformly $S$-torsion for all $R$-modules $N$;
      \item if $0 \rightarrow F_n \rightarrow ...\rightarrow F_1\rightarrow F_0\rightarrow M\rightarrow 0$ is an $S$-exact sequence, where $F_0, F_1, . . . , F_{n-1}$ are $S$-projective $R$-modules, then $F_n$ is $S$-projective;
      \item if $0 \rightarrow F_n \rightarrow ...\rightarrow F_1\rightarrow F_0\rightarrow M\rightarrow 0$ is an $S$-exact sequence, where $F_0, F_1, . . . , F_{n-1}$ are projective $R$-modules, then $F_n$ is $S$-projective;
        \item if $0 \rightarrow F_n \rightarrow ...\rightarrow F_1\rightarrow F_0\rightarrow M\rightarrow 0$ is an exact sequence, where $F_0, F_1, . . . , F_{n-1}$ are $S$-projective $R$-modules, then $F_n$ is $S$-projective;
    \item if $0 \rightarrow F_n \rightarrow ...\rightarrow F_1\rightarrow F_0\rightarrow M\rightarrow 0$ is an exact sequence, where $F_0, F_1, . . . , F_{n-1}$ are projective $R$-modules, then $F_n$ is $S$-projective;
          \item there exists an $S$-exact sequence $0 \rightarrow F_n \rightarrow ...\rightarrow F_1\rightarrow F_0\rightarrow M\rightarrow 0$, where $F_0, F_1, . . . , F_{n-1}$ are projective $R$-modules and $F_n$ is $S$-projective;
    \item there exists an exact sequence $0 \rightarrow F_n \rightarrow ...\rightarrow F_1\rightarrow F_0\rightarrow M\rightarrow 0$, where $F_0, F_1, . . . , F_{n-1}$ are projective $R$-modules and $F_n$ is $S$-projective;
         \item there exists an exact sequence $0 \rightarrow F_n \rightarrow ...\rightarrow F_1\rightarrow F_0\rightarrow M\rightarrow 0$, where $F_0, F_1, . . . , F_{n}$ are $S$-projective $R$-modules.
\end{enumerate}
\end{proposition}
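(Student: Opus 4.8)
The plan is to establish all ten statements as equivalent through a single strongly connected diagram of implications, with the dimension-shifting isomorphisms of Corollary \ref{big-Tor} as the only genuine engine and the characterization ``$P$ is $S$-projective iff $\Ext^1_R(P,N)$ is uniformly $S$-torsion for every $N$'' as the bridge between resolutions and $\Ext$. Before anything else I would record the elementary fact that uniform $S$-torsion is invariant under $S$-isomorphism: if $X$ is $S$-isomorphic to a uniformly $S$-torsion module $Y$, then taking $g$ with $g\circ f=s\,\Id_X$ from Lemma \ref{s-iso-inv} and $t$ with $tY=0$, one gets $ts\,X=0$. As $S$ is multiplicative, this property is preserved when finitely many $S$-isomorphisms are composed, which is precisely what a length-$n$ shift demands.

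\emph{The core step} is $(1)\Rightarrow(2)$ and $(3)\Rightarrow(4)$. Here I would cut the given $S$-exact resolution $0\to F_n\to\cdots\to F_0\to M\to 0$ into short $S$-exact pieces $0\to K_i\to F_i\to K_{i-1}\to 0$ (with $K_{-1}=M$ and the bottom syzygy $S$-isomorphic to $F_n$), each having $S$-projective middle term $F_i$. Applying Corollary \ref{big-Tor} to each piece and chaining the resulting $S$-isomorphisms yields $\Ext^{n+k}_R(M,N)\cong_S\Ext^{k}_R(F_n,N)$ for all $k\ge 1$. For $(1)\Rightarrow(2)$ the right-hand side is uniformly $S$-torsion because $F_n$ is $S$-projective; for $(3)\Rightarrow(4)$ the case $k=1$ gives $\Ext^1_R(F_n,N)\cong_S\Ext^{n+1}_R(M,N)$, uniformly $S$-torsion by hypothesis, so $F_n$ is $S$-projective by the $\Ext$-characterization. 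The step $(2)\Rightarrow(3)$ is just $k=1$.

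\emph{The soft part} downgrades and produces resolutions. The inclusions ``projective $\Rightarrow$ $S$-projective'' and ``exact $\Rightarrow$ $S$-exact (take $s=1$)'' give at once $(4)\Rightarrow(5)\Rightarrow(7)$ and $(4)\Rightarrow(6)$ among the universal statements, and $(9)\Rightarrow(8)$, $(9)\Rightarrow(10)$ among the existential ones. To manufacture an actual resolution I would take an ordinary projective resolution of $M$ and truncate it to an exact sequence $0\to F_n\to P_{n-1}\to\cdots\to P_0\to M\to 0$ with each $P_i$ projective; feeding this into $(7)$ forces $F_n$ to be $S$-projective, which is an instance of $(9)$, and feeding it into $(6)$ (the $P_i$ being $S$-projective) gives an instance of $(10)$. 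Finally $(8)\Rightarrow(1)$ and $(10)\Rightarrow(1)$ are immediate, since an $S$-exact sequence all of whose terms are $S$-projective is an $S$-projective $S$-resolution of length $\le n$. This closes the cycle $(1)\Rightarrow(2)\Rightarrow(3)\Rightarrow(4)\Rightarrow(5)\Rightarrow(7)\Rightarrow(9)\Rightarrow(8)\Rightarrow(1)$ together with $(4)\Rightarrow(6)\Rightarrow(10)\Rightarrow(1)$, so all ten are equivalent.

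\emph{The main obstacle} is the bookkeeping inside the shift: unlike the classical situation each short piece only delivers an $S$-isomorphism, so the successive connecting maps come with a priori different elements of $S$, and one must first pass through the genuine kernels and images and the $S$-isomorphisms between $\Im$ and $\Ker$ supplied by $S$-exactness before Corollary \ref{big-Tor} and Lemma \ref{s-iso-ext} apply. Because $n$ is fixed and $S$ is multiplicative, these finitely many elements multiply to a single $s\in S$ witnessing the required uniform $S$-torsion, so the uniformity survives the chain; verifying exactly this is where the care lies.
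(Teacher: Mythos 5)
Your proposal is correct and follows essentially the same route as the paper: dimension shifting through the short ($S$-)exact pieces via Corollary \ref{big-Tor} and Lemma \ref{s-iso-ext} for $(1)\Rightarrow(2)$ and $(3)\Rightarrow(4)$, the $\Ext^1$-characterization of $S$-projectivity as the bridge, trivial downgrades among the universal and existential statements, and truncation of an ordinary projective resolution to obtain the existence statements. The only differences are cosmetic --- you unroll the paper's induction on $n$ into an explicit chain of $S$-isomorphisms (correctly noting that the finitely many witnessing elements of $S$ multiply into one), and you wire the trivial implications slightly differently (e.g.\ $(6)\Rightarrow(10)$ directly rather than $(6)\Rightarrow(7)$), which closes the same strongly connected diagram.
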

\begin{proof}
$(1) \Rightarrow(2)$: We prove $(2)$ by induction on $n$. For the case $n = 0$, we have $M$ is $S$-projective, then (2) holds by \cite[Theorem 2.5]{zwz21-p}. If $n>0$, then
there is an $S$-exact sequence  $0 \rightarrow F_n \rightarrow ...\rightarrow F_1\rightarrow F_0\rightarrow M\rightarrow 0$,
where each $F_i$ is  $S$-projective for $i=0,...,n$. Set $K_0 = \Ker(F_0\rightarrow M)$ and $L_0=\Im(F_1\rightarrow F_0)$. Then both
$0 \rightarrow  K_0 \rightarrow  F_0 \rightarrow  M \rightarrow  0 $ and $0 \rightarrow  F_n \rightarrow  F_{n-1} \rightarrow...\rightarrow  F_1 \rightarrow  L_0 \rightarrow  0$ are $S$-exact. Since  $S$-$pd_R(L_0)\leq n-1$ and $L_0$ is $S$-isomorphic to $K_0$, $S$-$pd_R(K_0)\leq n-1$ by Lemma \ref{s-iso-pd}. By induction, $\Ext_R^{n-1+k}(K_0, N)$ is uniformly $S$-torsion for all  $R$-modules $N$ and all $k > 0$. It follows from Corollary \ref{big-Tor} that $\Ext_R^{n+k}(M, N)$ is uniformly $S$-torsion.

$(2) \Rightarrow(3)$, $(4)\Rightarrow(5)\Rightarrow(7)$ and $(4)\Rightarrow(6)\Rightarrow(7)$:  Trivial.

$(3) \Rightarrow(4)$: Let $0 \rightarrow F_n \xrightarrow{d_n}F^{n-1} \xrightarrow{d^{n-1}}F^{n-2} ...\xrightarrow{d_2} F_1\xrightarrow{d_1} F_0\xrightarrow{d_0} M\rightarrow 0$ be an $S$-exact sequence,  where $F_0, F_1, . . . , F^{n-1}$ are $S$-projective.
Then $F_n$ is $S$-projective if and only if $\Ext^1_R(F_n,N)$ is  uniformly $S$-torsion for all  $R$-modules $N$, if and only if $\Ext^2_R(\Im(d^{n-1}),N)$ is  uniformly $S$-torsion for all  $R$-modules $N$.
Iterating these steps, we can show $F_n$ is $S$-projective if and only if $\Ext_R^{n+1}(M, N)$ is uniformly $S$-torsion for all $R$-modules $N$.

$(9)\Rightarrow(10)\Rightarrow(1)$ and $(9)\Rightarrow(8)\Rightarrow(1)$:  Trivial.

$(7)\Rightarrow(9):$ Let $...\rightarrow P_n \rightarrow P^{n-1}\xrightarrow{d^{n-1}}  P^{n-2}... \rightarrow P_0\rightarrow M\rightarrow 0$ be a projective resolution of $M$. Set $F_n=\Ker(d^{n-1})$. Then we have an exact sequence  $0\rightarrow F_n\rightarrow P^{n-1}\xrightarrow{d^{n-1}}  P^{n-2}... \rightarrow P_0\rightarrow M\rightarrow 0$. By $(7)$, $F_n$ is $S$-projective. So $(9)$ holds.

\end{proof}

Similarly, we have the following result.
\begin{proposition}\label{s-inj-d}
Let $R$ be a ring and $S$ a multiplicative subset of $R$. The following statements are equivalent for an $R$-module $M$:
\begin{enumerate}
    \item $S$-$id_R(M)\leq n$;
    \item $\Ext_R^{n+k}(N, M)$ is uniformly $S$-torsion for all  $R$-modules $N$ and all $k > 0$;
    \item $\Ext_R^{n+1}(N, M)$ is uniformly $S$-torsion for all $R$-modules $N$;
      \item if $0 \rightarrow M \rightarrow E_0\rightarrow...\rightarrow  E_{n-1}\rightarrow E_n\rightarrow 0$ is an $S$-exact sequence, where $E_0, E_1, . . . , E_{n-1}$ are $S$-injective $R$-modules, then $F_n$ is $S$-injective;
      \item if $0 \rightarrow M \rightarrow E_0\rightarrow...\rightarrow  E_{n-1}\rightarrow E_n\rightarrow 0$ is an $S$-exact sequence, where $E_0, E_1, . . . , E_{n-1}$ are injective $R$-modules, then $E_n$ is $S$-injective;
        \item if $0 \rightarrow M \rightarrow E_0\rightarrow...\rightarrow  E_{n-1}\rightarrow E_n\rightarrow 0$ is an exact sequence, where $E_0, E_1, . . . , E_{n-1}$ are $S$-injective $R$-modules, then $E_n$ is $S$-injective;
    \item if $0 \rightarrow M \rightarrow E_0\rightarrow...\rightarrow  E_{n-1}\rightarrow E_n\rightarrow 0$ is an exact sequence, where $E_0, E_1, . . . , E_{n-1}$ are injective $R$-modules, then $E_n$ is $S$-injective;
          \item there exists an $S$-exact sequence $0 \rightarrow M \rightarrow E_0\rightarrow...\rightarrow  E_{n-1}\rightarrow E_n\rightarrow 0$, where $E_0, E_1, . . . , E_{n-1}$ are injective $R$-modules and $E_n$ is $S$-injective;
    \item there exists an exact sequence $0 \rightarrow M \rightarrow E_0\rightarrow...\rightarrow  E_{n-1}\rightarrow E_n\rightarrow 0$, where $E_0, E_1, . . . , E_{n-1}$ are injective $R$-modules and $E_n$ is $S$-injective;
         \item there exists an exact sequence $0 \rightarrow M \rightarrow E_0\rightarrow...\rightarrow  E_{n-1}\rightarrow E_n\rightarrow 0$, where $E_0, E_1, . . . , E_{n}$ are $S$-injective $R$-modules.
\end{enumerate}
\end{proposition}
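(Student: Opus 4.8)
The plan is to mirror the proof of Proposition \ref{s-projective d} line for line under the duality that interchanges projective resolutions with injective $S$-resolutions and the functor $\Ext^n_R(-,N)$ with $\Ext^n_R(N,-)$; the role of Corollary \ref{big-Tor}(1) is played throughout by Corollary \ref{big-Tor}(2). I would prove the same cycle of implications, namely $(1)\Rightarrow(2)\Rightarrow(3)\Rightarrow(4)$, then $(4)\Rightarrow(5)\Rightarrow(7)$ and $(4)\Rightarrow(6)\Rightarrow(7)$, then $(7)\Rightarrow(9)$, and finally $(9)\Rightarrow(10)\Rightarrow(1)$ together with $(9)\Rightarrow(8)\Rightarrow(1)$. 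As in the projective case, all the implications among $(4)$--$(10)$ become formal once one records two facts: every injective module is $S$-injective (its $\Ext^1_R(N,-)$ vanishes, hence is uniformly $S$-torsion), and every exact sequence is in particular $S$-exact. Substituting these into the hypotheses renders each of $(4)\Rightarrow(5)$, $(5)\Rightarrow(7)$, $(4)\Rightarrow(6)$, $(6)\Rightarrow(7)$, $(9)\Rightarrow(10)$, $(10)\Rightarrow(1)$, $(9)\Rightarrow(8)$, $(8)\Rightarrow(1)$ immediate, and $(2)\Rightarrow(3)$ is just the special case $k=1$.

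The substance lies in $(1)\Rightarrow(2)$, $(3)\Rightarrow(4)$ and $(7)\Rightarrow(9)$. For $(1)\Rightarrow(2)$ I would induct on $n$. When $n=0$ the module $M$ is $S$-injective, and I must check that $\Ext^k_R(N,M)$ is uniformly $S$-torsion for every $k>0$: embedding $M$ in an injective module $E$ yields an $S$-short exact sequence $0\to M\xrightarrow{\iota} E\to C\to 0$, which is $S$-split by the $S$-split characterization of $S$-injective modules, so there are $s\in S$ and $f':E\to M$ with $f'\iota=s\,\Id_M$; applying $\Ext^k_R(N,-)$ and using $\Ext^k_R(N,E)=0$ for $k\geq 1$ forces $s\,\Id_{\Ext^k_R(N,M)}=0$. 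For the inductive step, given an $S$-injective $S$-resolution $0\to M\to E_0\to\cdots\to E_n\to 0$ with $n>0$, put $C_0=\Coker(M\to E_0)$; then $0\to M\to E_0\to C_0\to 0$ is $S$-exact with $E_0$ $S$-injective, and $C_0$ is $S$-isomorphic to $\Im(E_0\to E_1)$, which carries the $S$-injective $S$-resolution $0\to\Im(E_0\to E_1)\to E_1\to\cdots\to E_n\to 0$ of length $n-1$. By Lemma \ref{s-iso-pd}, $S$-$id_R(C_0)\leq n-1$, so the induction hypothesis makes $\Ext^{n-1+k}_R(N,C_0)$ uniformly $S$-torsion for all $N$ and all $k>0$; the degree-one shift of Corollary \ref{big-Tor}(2) applied to $0\to M\to E_0\to C_0\to 0$ then exhibits $\Ext^{n+k}_R(N,M)$ as $S$-isomorphic to an $\Ext$-module of $C_0$ in some degree $n-1+k'$ with $k'>0$, which is uniformly $S$-torsion.

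For $(3)\Rightarrow(4)$ I would split the $S$-exact sequence $0\to M\to E_0\to\cdots\to E_{n-1}\to E_n\to 0$ into short $S$-exact pieces $0\to K^i\to E_i\to K^{i+1}\to 0$ with $K^0=M$ and $K^n=E_n$, and iterate the degree shift of Corollary \ref{big-Tor}(2) across the $S$-injective terms $E_0,\dots,E_{n-1}$; this transports $\Ext^1_R(N,E_n)$ up through $\Ext^2_R(N,K^{n-1}),\dots$ until it is $S$-isomorphic to $\Ext^{n+1}_R(N,M)$, which is uniformly $S$-torsion by $(3)$, so $E_n$ is $S$-injective. For $(7)\Rightarrow(9)$ I would use that the category of $R$-modules has enough injectives: take an injective resolution $0\to M\to E^0\to E^1\to\cdots$, let $E_n$ be the $n$-th cosyzygy $\Coker(E^{n-2}\to E^{n-1})$, and read off the exact sequence $0\to M\to E^0\to\cdots\to E^{n-1}\to E_n\to 0$ with $E^0,\dots,E^{n-1}$ injective; then $(7)$ forces $E_n$ to be $S$-injective, giving $(9)$.

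Because the whole argument is a faithful dual of Proposition \ref{s-projective d}, I anticipate no conceptual obstacle; the delicate point is the index bookkeeping in $(3)\Rightarrow(4)$. There one needs a single honest $S$-isomorphism between $\Ext^1_R(N,E_n)$ and $\Ext^{n+1}_R(N,M)$, so the degree-raising shift of Corollary \ref{big-Tor}(2) must be applied to the quotient cosyzygies $K^{i+1}$ at exactly the right degree at each of the $n$ stages, so that the shifts compose to a net shift of $n$; this is the one spot where an off-by-one slip could occur. The base case of $(1)\Rightarrow(2)$ is the secondary point to pin down, since it genuinely requires uniform $S$-torsion of all higher $\Ext^k_R(N,M)$ for $S$-injective $M$, not merely of $\Ext^1_R(N,M)$.
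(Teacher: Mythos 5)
Your proposal is exactly what the paper intends: the paper gives no separate proof of Proposition \ref{s-inj-d}, saying only ``Similarly,'' i.e.\ dualize the proof of Proposition \ref{s-projective d}, and your argument is a correct, faithful dualization following the same cycle of implications, with Corollary \ref{big-Tor}(2) replacing \ref{big-Tor}(1). If anything, you supply more detail than the paper records (notably the honest $S$-split argument for the base case $n=0$, where the paper's projective version merely cites \cite[Theorem 2.5]{zwz21-p}), and your index bookkeeping in $(1)\Rightarrow(2)$ and $(3)\Rightarrow(4)$ checks out.
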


\begin{corollary}\label{spd-spd-1}
Let $R$ be a ring and $S'\subseteq S$  multiplicative subsets of $R$. Suppose $M$ is an $R$-module, then  $S$-$pd_R(M)\leq S'$-$pd_R(M)$ and $S$-$id_R(M)\leq S'$-$id_R(M)$.
\end{corollary}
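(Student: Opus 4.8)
The plan is to bypass resolutions entirely and argue through the uniform-torsion characterizations of the two dimensions established in Proposition \ref{s-projective d} and Proposition \ref{s-inj-d}. The single observation that drives everything is elementary: since $S'\subseteq S$, every uniformly $S'$-torsion module is automatically uniformly $S$-torsion, because an element $s'\in S'$ with $s'T=0$ lies in $S$ as well. Thus any statement of the form ``$\Ext$ is uniformly $S'$-torsion'' upgrades for free to ``$\Ext$ is uniformly $S$-torsion''.

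First I would dispose of the trivial case $S'$-$pd_R(M)=\infty$, where the asserted inequality holds vacuously. So assume $n:=S'$-$pd_R(M)$ is finite. Applying the equivalence $(1)\Leftrightarrow(3)$ of Proposition \ref{s-projective d} to the multiplicative set $S'$, I obtain that $\Ext_R^{n+1}(M,N)$ is uniformly $S'$-torsion for every $R$-module $N$. By the observation above, each such $\Ext_R^{n+1}(M,N)$ is then uniformly $S$-torsion. Now applying the implication $(3)\Rightarrow(1)$ of Proposition \ref{s-projective d}, this time to the multiplicative set $S$, yields $S$-$pd_R(M)\leq n=S'$-$pd_R(M)$.

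The inequality for injective dimensions is proved verbatim, replacing $\Ext_R^{n+1}(M,N)$ by $\Ext_R^{n+1}(N,M)$ and invoking the corresponding equivalences $(1)\Leftrightarrow(3)$ and $(3)\Rightarrow(1)$ of Proposition \ref{s-inj-d}. There is no genuine obstacle here: the entire computational content of the corollary is already absorbed into the two propositions, and the only new ingredient is the monotonicity of ``uniformly torsion'' under enlarging the multiplicative set, which is immediate from $S'\subseteq S$.
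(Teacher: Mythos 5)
Your proposal is correct and follows exactly the paper's own argument: the paper likewise reduces the corollary to the Ext-characterizations of Propositions \ref{s-projective d} and \ref{s-inj-d}, noting that uniformly $S'$-torsion implies uniformly $S$-torsion since $S'\subseteq S$. Your version merely spells out the trivial case $S'$-$pd_R(M)=\infty$ and the direction of each equivalence, which the paper leaves implicit.
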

\begin{proof} Suppose $S'\subseteq S$ are multiplicative subsets of $R$. Let  $M$ and $N$ be $R$-modules.  If  $\Ext_R^{n+1}(M, N)$ is uniformly $S'$-torsion, then  $\Ext_R^{n+1}(M, N)$ is uniformly $S$-torsion. The result follows by Proposition \ref{s-projective d}.
\end{proof}

\begin{proposition}\label{spd-exact}
Let $R$ be a ring and $S$ a multiplicative subset of $R$. Let $0\rightarrow A\rightarrow B\rightarrow C\rightarrow 0$ be an $S$-exact sequence of $R$-modules. Then the following assertions hold.
\begin{enumerate}
  \item $S$-$pd_R(C)\leq 1+\max\{S$-$pd_R(A),S$-$pd_R(B)\}$.
    \item  If $S$-$pd_R(B)<S$-$pd_R(C)$, then $S$-$pd_R(A)=S$-$pd_R(C)-1>S$-$pd_R(B).$
        \item     $S$-$id_R(A)\leq 1+\max\{S$-$id_R(B),S$-$id_R(C)\}$.
            \item  If $S$-$id_R(B)<S$-$id_R(A)$, then $S$-$id_R(C)=S$-$id_R(A)-1>S$-$id_R(B).$
\end{enumerate}
\end{proposition}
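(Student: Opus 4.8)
My plan is to convert each dimension statement into the uniform $S$-torsion of suitable Ext-modules, via Proposition \ref{s-projective d} and Proposition \ref{s-inj-d}, and then to run the short $S$-exact sequence $0\to A\to B\to C\to 0$ through the two long $S$-exact Ext-sequences produced by Theorem \ref{s-iso-tor}. The single elementary input I will use repeatedly is the following \emph{sandwich observation}: if $X\xrightarrow{\alpha}Y\xrightarrow{\beta}Z$ is $S$-exact at $Y$ with witness $s\in S$, and $X,Z$ are uniformly $S$-torsion, say $s_XX=0$ and $s_ZZ=0$, then $Y$ is uniformly $S$-torsion with $s_Xss_Z Y=0$. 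Indeed, for $y\in Y$ we have $\beta(s_Zy)=0$, hence $ss_Zy\in\Im\alpha$, and $s_X$ annihilates $\Im\alpha$; since $S$ is multiplicatively closed, $s_Xss_Z\in S$.

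For (1), put $m=\max\{S$-$pd_R(A),S$-$pd_R(B)\}$ and assume $m<\infty$. Theorem \ref{s-iso-tor} gives, for every $R$-module $N$, an $S$-exact segment $\Ext^{m+1}_R(A,N)\to\Ext^{m+2}_R(C,N)\to\Ext^{m+2}_R(B,N)$, whose two outer terms are uniformly $S$-torsion by Proposition \ref{s-projective d}; the sandwich observation makes $\Ext^{m+2}_R(C,N)$ uniformly $S$-torsion, and Proposition \ref{s-projective d} reads this back as $S$-$pd_R(C)\le m+1$. For (2), write $b=S$-$pd_R(B)$ and $c=S$-$pd_R(C)$ with $b<c$; I apply the sandwich twice to the long $S$-exact sequence $\cdots\to\Ext^n_R(B,N)\to\Ext^n_R(A,N)\to\Ext^{n+1}_R(C,N)\to\cdots$. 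The segment at $\Ext^c_R(A,N)$, whose neighbours $\Ext^c_R(B,N)$ (as $c>b$) and $\Ext^{c+1}_R(C,N)$ (as $c+1>c$) are uniformly $S$-torsion, forces $\Ext^c_R(A,N)$ uniformly $S$-torsion for all $N$, i.e. $S$-$pd_R(A)\le c-1$. For the reverse inequality I argue by contradiction: if $S$-$pd_R(A)\le c-2$, then in the segment at $\Ext^c_R(C,N)$ both neighbours $\Ext^{c-1}_R(A,N)$ and $\Ext^c_R(B,N)$ are uniformly $S$-torsion, whence $\Ext^c_R(C,N)$ would be uniformly $S$-torsion for all $N$, contradicting $S$-$pd_R(C)=c$. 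Thus $S$-$pd_R(A)=c-1$, and the comparison with $S$-$pd_R(B)$ follows from $b<c$.

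Parts (3) and (4) are the injective duals and are handled in exactly the same manner, now invoking Proposition \ref{s-inj-d} and feeding the sequence into the first (covariant in the second variable) long $S$-exact sequence $\cdots\to\Ext^n_R(M,A)\to\Ext^n_R(M,B)\to\Ext^n_R(M,C)\to\Ext^{n+1}_R(M,A)\to\cdots$ of Theorem \ref{s-iso-tor}. For (3) one sandwiches $\Ext^{m+2}_R(M,A)$ between $\Ext^{m+1}_R(M,C)$ and $\Ext^{m+2}_R(M,B)$, with $m=\max\{S$-$id_R(B),S$-$id_R(C)\}$. For (4), with $b=S$-$id_R(B)<a=S$-$id_R(A)$, one sandwiches at $\Ext^a_R(M,C)$ for the upper bound and derives a contradiction at $\Ext^a_R(M,A)$ for the lower bound, giving $S$-$id_R(C)=a-1$. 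The roles of $A$ and $C$ are interchanged relative to the projective case, which is precisely the effect of passing from the second to the first sequence of Theorem \ref{s-iso-tor}.

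The step I expect to be the genuine obstacle is not any individual inequality but the uniformity of the $S$-elements in the module variable. To re-apply Proposition \ref{s-projective d} (resp. Proposition \ref{s-inj-d}) I must know that ``$\Ext^{k+1}_R(M,N)$ is uniformly $S$-torsion for all $N$'' is witnessed by a \emph{single} $s\in S$ independent of $N$; this holds because an $R$-module of $S$-projective dimension $\le k$ admits a resolution whose top term is $S$-projective, and $S$-projectivity supplies one element $s$ annihilating all higher Ext simultaneously, via an $s$-factorization through a projective (dually on the injective side). One must then check that the $S$-exactness witnesses coming out of Theorem \ref{s-iso-tor} are likewise independent of $N$, which they are: they are inherited from the $S$-isomorphisms $A\simeq\Ker g$ and $C\simeq\Im g$ attached to the fixed sequence $0\to A\to B\to C\to 0$, not to $N$. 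Only after all the relevant elements are seen to be choosable independently of $N$ may I multiply the finitely many of them together and conclude uniform $S$-torsion for all $N$, as the two propositions demand.
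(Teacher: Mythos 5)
The paper does not actually carry out a proof of Proposition \ref{spd-exact}: it omits the argument, deferring to the classical case (\cite[Theorems 3.5.6 and 3.5.13]{fk16}). Your proof is exactly the $S$-adaptation that this deferral presupposes, and it is correct. The sandwich observation is the right uniform replacement for ``a module squeezed between two zeros in an exact sequence is zero,'' and your witness $s_Xss_Z$ is computed correctly (only $s\Ker(\beta)\subseteq \Im(\alpha)$ is needed, which the paper's definition of $S$-exactness supplies). The placement of each sandwich inside the two long $S$-exact sequences of Theorem \ref{s-iso-tor} is accurate in all four parts, and reading the conclusions back through Propositions \ref{s-projective d} and \ref{s-inj-d} is legitimate. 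Your closing worry about uniformity in the module variable is also settled correctly: under the per-module reading of ``uniformly $S$-torsion for all $N$'' (which is how Propositions \ref{s-projective d} and \ref{s-inj-d} are used throughout the paper) no common witness is needed, and if one insists on a single witness, your factorization of $s\,\Id$ through a free module (the same device as in Lemma \ref{tor-s-poly}) does produce one element killing all higher Ext's simultaneously; note also that the $S$-exactness witnesses in Theorem \ref{s-iso-tor} depend only on the fixed sequence $0\rightarrow A\rightarrow B\rightarrow C\rightarrow 0$, as you say.

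Two small points. First, in (2) and (4) your argument yields $S$-$pd_R(A)=S$-$pd_R(C)-1\geq S$-$pd_R(B)$, not the strict inequality printed in the statement; indeed the strict inequality is false as stated: take $S=\{1\}$, $R=\mathbb{Z}$ and $0\rightarrow 2\mathbb{Z}\rightarrow \mathbb{Z}\rightarrow \mathbb{Z}/2\mathbb{Z}\rightarrow 0$, where $S$-$pd_R(B)=0<1=S$-$pd_R(C)$ but $S$-$pd_R(A)=0=S$-$pd_R(B)$. So the ``$>$'' is a slip in the paper (the classical source has ``$\geq$''), and your remark that the comparison ``follows from $b<c$'' should be understood as giving exactly ``$\geq$''. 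Second, in (2) and (4) you tacitly assume $c=S$-$pd_R(C)$ (resp.\ $a=S$-$id_R(A)$) finite; the case $c=\infty$, $b<\infty$ needs one extra line, which your contradiction sandwich already provides: if $S$-$pd_R(A)=a<\infty$, then sandwiching $\Ext^{n}_R(C,N)$ between $\Ext^{n-1}_R(A,N)$ and $\Ext^{n}_R(B,N)$ for all $n>\max\{a+1,b\}$ would force $S$-$pd_R(C)<\infty$, a contradiction; dually for (4). With these two glosses your write-up is a complete proof of the (corrected) proposition, and is in substance the argument the paper intends but does not print.
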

\begin{proof} The proof  is similar with that of the classical case (see \cite[Theorem 3.5.6]{fk16} and \cite[Theorem 3.5.13]{fk16}). So we omit it.
\end{proof}

\begin{proposition}\label{spd-sp-exact}
 Let $0\rightarrow A\rightarrow B\rightarrow C\rightarrow 0$ be an $S$-split $S$-exact sequence of $R$-modules. Then the following assertions hold.
\begin{enumerate}
  \item $S$-$pd_R(B)=\max\{S$-$pd_R(A),S$-$pd_R(C)\}$.
  \item $S$-$id_R(B)=\max\{S$-$id_R(A),S$-$id_R(C)\}$.
    %\item $S$-$fd_R(B)=\max\{S$-$fd_R(A),S$-$fd_R(C)\}$.
\end{enumerate}
\end{proposition}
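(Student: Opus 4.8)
The plan is to reduce both equalities to a single structural fact: for an $S$-split $S$-exact sequence $0\to A\xrightarrow{f} B\xrightarrow{g} C\to 0$, the middle term $B$ is $S$-isomorphic to the direct sum $A\oplus C$. Once this is in hand, Lemma \ref{s-iso-pd} gives $S$-$pd_R(B)=S$-$pd_R(A\oplus C)$ and $S$-$id_R(B)=S$-$id_R(A\oplus C)$, and it remains only to compute the dimensions of a direct sum. That last step is immediate from the torsion criteria: since $\Ext^{n+1}_R(A\oplus C,N)\cong\Ext^{n+1}_R(A,N)\oplus\Ext^{n+1}_R(C,N)$ and a direct sum of two modules is uniformly $S$-torsion exactly when both summands are, Proposition \ref{s-projective d} yields $S$-$pd_R(A\oplus C)=\max\{S$-$pd_R(A),S$-$pd_R(C)\}$, and the dual computation with $\Ext^{n+1}_R(M,A\oplus C)$ and Proposition \ref{s-inj-d} gives the injective statement. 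Thus part (2) needs no separate argument beyond the same $S$-isomorphism.

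The heart of the proof is therefore constructing that $S$-isomorphism, and I would take the obvious candidate $\phi=(f',g)\colon B\to A\oplus C$, $b\mapsto(f'(b),g(b))$, where $f'\colon B\to A$ is the retraction supplied by the $S$-split hypothesis, so $f'\circ f=s\,\Id_A$ for some $s\in S$. I would also fix $t\in S$ witnessing $S$-exactness at $B$ (so $t\Ker(g)\subseteq\Im(f)$ and $t\Im(f)\subseteq\Ker(g)$) and $t_1\in S$ with $t_1C\subseteq\Im(g)$ coming from $g$ being an $S$-epimorphism. By Lemma \ref{s-iso-inv} it then suffices to show $\Ker(\phi)$ and $\Coker(\phi)$ are uniformly $S$-torsion. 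The kernel is easy: if $f'(b)=0$ and $g(b)=0$, then $b\in\Ker(g)$ forces $tb=f(a)$ for some $a$, and applying $f'$ gives $sa=f'(tb)=t f'(b)=0$, whence $stb=sf(a)=f(sa)=0$; so $st$ annihilates $\Ker(\phi)$.

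The cokernel computation is the step I expect to be the main obstacle, because the retraction only controls the $A$-direction and the $C$-direction must be recovered through $g$. First I would observe $\phi(tf(a))=(tsa,\,tgf(a))=(sta,0)$, using $t\,gf=0$ (a consequence of $t\Im(f)\subseteq\Ker(g)$), so every $(sta,0)$ lies in $\Im(\phi)$. Next, writing $t_1c=g(b_0)$ for a given $c\in C$, one has $\phi(b_0)=(f'(b_0),t_1c)$, and subtracting the already-available element $(stf'(b_0),0)\in\Im(\phi)$ from $st\,\phi(b_0)=(stf'(b_0),stt_1c)$ shows $(0,stt_1c)\in\Im(\phi)$. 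Combining the two displays gives $stt_1\cdot(a,c)\in\Im(\phi)$ for every $(a,c)$, so $stt_1$ annihilates $\Coker(\phi)$ and $\phi$ is an $S$-isomorphism. I would then simply invoke Lemma \ref{s-iso-pd} together with the direct-sum computation above to conclude both (1) and (2), noting that nothing in the final two steps distinguishes the projective from the injective case once $B$ is known to be $S$-isomorphic to $A\oplus C$.
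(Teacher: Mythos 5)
Your proof is correct, and it takes a genuinely different route from the paper's. The paper first invokes Lemma \ref{s-iso-pd} to replace the given sequence by an honest $S$-split \emph{exact} sequence, asserts both a quasi-retraction $f'$ with $f'\circ f=s_1\Id_A$ and a quasi-section $g'$ with $g\circ g'=s_2\Id_C$, and then works degreewise: applying $\Ext^n_R(M,-)$, the compositions $\Ext^n_R(M,f')\circ\Ext^n_R(M,f)$ and $\Ext^n_R(M,g)\circ\Ext^n_R(M,g')$ are multiplication by $s_1$ and $s_2$, so each induced sequence of Ext modules is $S$-split at both ends and the max formula follows from the uniform-torsion criteria. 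You instead establish the structural fact that $B$ is $S$-isomorphic to $A\oplus C$ via $\phi=(f',g)$, and your element-level verifications are sound: $st$ kills $\Ker(\phi)$ and $stt_1$ kills $\Coker(\phi)$, exactly as you compute. The conclusion then follows from Lemma \ref{s-iso-pd}, the additivity $\Ext^{n+1}_R(A\oplus C,N)\cong\Ext^{n+1}_R(A,N)\oplus\Ext^{n+1}_R(C,N)$, and the observation that a direct sum of two modules is uniformly $S$-torsion iff both summands are; since this equivalence holds for every $n$, the infinite-dimension cases come along for free. Your route buys three things the paper's does not. First, it works directly on the $S$-exact sequence using only the retraction $f'$, which is all the definition of $S$-split actually supplies; the paper's section $g'$ must be manufactured from $f'$ after passing to an exact sequence, a step it leaves implicit. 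Second, it isolates a reusable lemma: the middle term of an $S$-split $S$-exact sequence is $S$-isomorphic to the direct sum of the ends. Third, it treats (1) and (2) symmetrically with no variance bookkeeping; note that the paper's displayed argument for (1) applies $\Ext^n_R(M,-)$ with $A,B,C$ in the \emph{second} variable, which by Proposition \ref{s-inj-d} is the criterion for $S$-injective dimension, whereas (1) needs the contravariant sequence $\Ext^n_R(C,N)\rightarrow\Ext^n_R(B,N)\rightarrow\Ext^n_R(A,N)$ --- a slip in variance that your approach avoids by construction.
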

\begin{proof} We only show the first assertion since the other one is similar.  Since the $S$-projective dimensions of $R$-modules are invariant under $S$-isomorphisms by Lemma \ref{s-iso-pd}, we may assume $0\rightarrow A\xrightarrow{f} B\xrightarrow{g} C\rightarrow 0$ is an $S$-split exact sequence. So there exists $R$-homomorphisms $f': B\rightarrow A$ and $g': C\rightarrow B$ such that $f'\circ f=s_1\Id_A$ and  $g\circ g'=s_2\Id_C$ for some $s_1,s_2\in S$. To prove $(1)$,
we just need to show that $0\rightarrow\Ext_R^n(M,A)\xrightarrow{\Ext_R^n(M,f)} \Ext_R^n(M,B)\xrightarrow{\Ext_R^n(M,g)} \Ext_R^n(M,C)\rightarrow 0$ is an $S$-exact sequence for any $R$-module $M$.
Since  the composition map $\Ext_R^n(M,f') \circ \Ext_R^n(M,f): \Ext_R^n(M,A) \rightarrow \Ext_R^n(M,A)$ is equal to
$\Ext_R^n(M,s_1\Id_A)$ which is just the multiplication map by $s_1$, we have $\Ext_R^n(M,f)$ is an $S$-split $S$-monomorphism. Similarly, $\Ext_R^n(M,g)$ is an $S$-split $S$-epimorphism.
\end{proof}

Let $\p$ be a prime ideal of $R$ and $M$ an $R$-module. Denote $\p$-$pd_R(M)$ (resp., $\p$-$id_R(M)$) to be $(R-\p)$-$pd_R(M)$ (resp.,  $(R-\p)$-$id_R(M)$) briefly. The next result gives a new local characterization of projective dimension and injective dimension of an $R$-module.

\begin{proposition}\label{pd-spd}
Let $R$ be a ring and $M$ an $R$-module. Then
\begin{align*}
pd_R(M)=\sup\{\p\mbox{-}pd_R(M)|\p\in \Spec(R)\}=\sup\{\m\mbox{-}pd_R(M)|\m\in \Max(R)\} .
\end{align*}
and
\begin{align*}
id_R(M)=\sup\{\p\mbox{-}id_R(M)|\p\in \Spec(R)\}=\sup\{\m\mbox{-}id_R(M)|\m\in \Max(R)\} .
\end{align*}
\end{proposition}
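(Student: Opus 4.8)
The plan is to prove both displayed identities by squeezing $pd_R(M)$ between the two suprema and showing the extremes coincide; the injective case will then follow by the same argument with the variance of $\Ext$ reversed. Write $A=pd_R(M)$, $B=\sup\{\p\mbox{-}pd_R(M)\mid\p\in\Spec(R)\}$ and $C=\sup\{\m\mbox{-}pd_R(M)\mid\m\in\Max(R)\}$. First I would record the easy chain $C\leq B\leq A$: the inequality $C\leq B$ is immediate from $\Max(R)\subseteq\Spec(R)$, and $B\leq A$ holds because $\{1\}\subseteq R\setminus\p$ for every prime $\p$, so Corollary \ref{spd-spd-1} (equivalently, the trivial bound $S\mbox{-}pd_R(M)\leq pd_R(M)$) gives $\p\mbox{-}pd_R(M)\leq pd_R(M)$ for each $\p$ and hence $B\leq A$. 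It then remains only to prove the reverse inequality $A\leq C$.

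To get $A\leq C$, I would split on whether $C$ is finite. If $C=\infty$ there is nothing to do, since $C\leq A$ already forces $A=\infty=C$. So assume $C=n<\infty$; then $\m\mbox{-}pd_R(M)\leq n$ for every maximal ideal $\m$, and the goal becomes $pd_R(M)\leq n$, i.e. $\Ext^{n+1}_R(M,N)=0$ for every $R$-module $N$. Fixing $N$ and putting $T=\Ext^{n+1}_R(M,N)$, I would invoke the equivalence $(1)\Leftrightarrow(3)$ of Proposition \ref{s-projective d} with $S=R\setminus\m$: it tells me $T$ is uniformly $(R\setminus\m)$-torsion, so some $s_\m\in R\setminus\m$ satisfies $s_\m T=0$.

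The crux is then a local--global annihilator argument. For each maximal ideal $\m$ the element $s_\m$ lies in $\Ann_R(T)$ but not in $\m$, so $\Ann_R(T)\not\subseteq\m$; as this holds for every $\m\in\Max(R)$, the ideal $\Ann_R(T)$ is contained in no maximal ideal and must equal $R$, forcing $T=0$. Since $N$ was arbitrary this yields $pd_R(M)\leq n=C$, and combined with $C\leq B\leq A$ gives $A=B=C$, which is the first identity.

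The step I expect to be the real content is precisely this passage from ``uniformly $(R\setminus\m)$-torsion for every $\m$'' to ``zero'': it works because uniformity provides, for each $\m$, a single scalar $s_\m$ annihilating all of $T$, which is exactly what lets me regard $s_\m$ as a member of the annihilator ideal and conclude via the fact that an ideal escaping every maximal ideal is the unit ideal. The injective statement is obtained verbatim, replacing Proposition \ref{s-projective d} by Proposition \ref{s-inj-d} and the bifunctor $\Ext^{n+1}_R(M,-)$ by $\Ext^{n+1}_R(-,M)$; the identical annihilator argument forces $\Ext^{n+1}_R(N,M)=0$ for all $N$ and hence $id_R(M)\leq C$, completing the proof.
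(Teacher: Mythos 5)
Your proposal is correct and follows essentially the same route as the paper: both establish the trivial chain $\sup_{\m}\leq\sup_{\p}\leq pd_R(M)$ and then, for $\sup_\m = n<\infty$ and a fixed module $N$, use Proposition \ref{s-projective d} to produce for each maximal ideal $\m$ an annihilator $s_\m\in R\setminus\m$ of $\Ext_R^{n+1}(M,N)$, concluding that the annihilator ideal escapes every maximal ideal and hence equals $R$, so the Ext group vanishes. Your explicit handling of the case $\sup_\m=\infty$ via the already-established inequality is a slightly cleaner packaging of the paper's separate infinite-case argument, but the mathematical content is identical.
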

\begin{proof} We only show the first equation since the other one is similar. Trivially, $\sup  \{\m\mbox{-}pd_R(M) |\ \m\in \Max(R)\} \leq \sup \{\p\mbox{-}pd_R(M) |\ \p\in \Spec(R)\} \leq pd_R(M)$.
Suppose $\sup  \{\m\mbox{-}pd_R(M) |\ \m\in \Max(R)\}=n$. For any $R$-module $N$, there exists an element $s^{\m}\in R-\m$ such that  $s^{\m}\Ext_R^{n+1}(M, N)=0$  by Proposition \ref{s-projective d}. Since the ideal generated by all $s^{\m}$ is $R$, we have $\Ext_R^{n+1}(M, N)=0$ for all $R$-modules $N$. So $pd_R(M)\leq n$. Suppose $\sup  \{\m\mbox{-}pd_R(M) |\ \m\in \Max(R)\}=\infty$. Then for any $n\geq 0$, there exists a maximal ideal $\m$ and an element $s^{\m}\in R-\m$ such that  $s^{\m}\Ext_R^{n+1}(M, N)\not=0$ for some $R$-module $N$. So for any $n\geq 0$, we have $\Ext_R^{n+1}(M, N)\not=0$ for some $R$-module $N$. Thus $pd_R(M)=\infty$. So the equalities hold.
\end{proof}

\section{On the $S$-global dimensions of rings}
Recall that the global dimension gl.dim$(R)$ of a ring $R$ is the supremum of projective dimensions of all $R$-modules (see \cite[Definition 3.5.17]{fk16}). Now, we introduce the $S$-analogue of global dimensions of rings $R$ for a multiplicative subset $S$ of $R$.

\begin{definition}\label{w-phi-projective }
The $S$-global dimension of a ring $R$ is defined by
\begin{center}
$S$-gl.dim$(R) = \sup\{S$-$pd_R(M) | M $ is an $R$-module$\}.$
\end{center}
\end{definition}\label{def-wML}
Obviously, $S$-gl.dim$(R)\leq $gl.dim$(R)$ for any multiplicative subset $S$ of $R$.  And if $S$ is composed of units, then $S$-gl.dim$(R)=$gl.dim$(R)$ .  The next result characterizes the $S$-global dimension of a ring $R$.
\begin{proposition}\label{w-g-projective}
Let $R$ be a ring and $S$ a multiplicative subset of $R$. The following statements are equivalent for $R$:
\begin{enumerate}
  \item  $S$-gl.dim$(R)\leq  n$;
    \item  $S$-$pd_R(M)\leq n$ for all $R$-modules $M$;
    \item $\Ext_R^{n+k}(M, N)$ is uniformly $S$-torsion for all $R$-modules $M, N$ and all $k > 0$;
    \item  $\Ext_R^{n+1}(M, N)$ is uniformly $S$-torsion for all $R$-modules $M, N$;
    \item  $S$-$id_R(M)\leq n$ for all $R$-modules $M$.
\end{enumerate}
\end{proposition}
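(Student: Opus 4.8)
The plan is to reduce the whole statement to the two pointwise characterizations already proved, Proposition \ref{s-projective d} and Proposition \ref{s-inj-d}, and then to exploit the left--right symmetry of the uniform $S$-torsion condition on $\Ext^{n+1}$ to glue the projective and injective sides together. Beyond those two propositions the only content is the bookkeeping of a universal quantifier over all modules, so no genuinely new homological input is needed.

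First I would settle $(1)\Leftrightarrow(2)$ straight from the definition of the $S$-global dimension: since $S$-gl.dim$(R)$ is by definition the supremum of $S$-$pd_R(M)$ over all $R$-modules $M$, the inequality $S$-gl.dim$(R)\leq n$ holds exactly when $S$-$pd_R(M)\leq n$ for every $M$.

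Next, for $(2)\Leftrightarrow(3)\Leftrightarrow(4)$, I would apply the chain $(1)\Leftrightarrow(2)\Leftrightarrow(3)$ of Proposition \ref{s-projective d} to a fixed module $M$: it gives that $S$-$pd_R(M)\leq n$ is equivalent to $\Ext_R^{n+k}(M,N)$ being uniformly $S$-torsion for all $N$ and all $k>0$, and equivalent to $\Ext_R^{n+1}(M,N)$ being uniformly $S$-torsion for all $N$. Reading each of these equivalences under the quantifier ``for all $R$-modules $M$'' produces precisely statements $(2)$, $(3)$, $(4)$ of the proposition.

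The one step with any subtlety is $(4)\Leftrightarrow(5)$. Here I would invoke the equivalence $(1)\Leftrightarrow(3)$ of Proposition \ref{s-inj-d}, which says that $S$-$id_R(M)\leq n$ holds iff $\Ext_R^{n+1}(N,M)$ is uniformly $S$-torsion for all $R$-modules $N$. Quantifying over all $M$, statement $(5)$ becomes ``$\Ext_R^{n+1}(N,M)$ is uniformly $S$-torsion for all $R$-modules $M$ and $N$'', which is identical to $(4)$ after renaming the two dummy variables, since in $(4)$ both arguments of $\Ext$ already range over all $R$-modules. The point worth emphasizing is exactly this symmetry: the single condition that $\Ext_R^{n+1}$ be uniformly $S$-torsion on every pair of arguments simultaneously detects, via its first argument and Proposition \ref{s-projective d}, that all modules have $S$-projective dimension $\leq n$, and, via its second argument and Proposition \ref{s-inj-d}, that all modules have $S$-injective dimension $\leq n$. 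This is what forces the $S$-projective and $S$-injective global dimensions to coincide, and it is the only place in the argument that is more than a direct translation of an earlier result.
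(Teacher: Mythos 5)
Your proposal is correct and takes essentially the same route as the paper: both arguments reduce everything to the pointwise characterizations in Propositions \ref{s-projective d} and \ref{s-inj-d}, with $(4)\Leftrightarrow(5)$ being nothing more than renaming the dummy variables in the universally quantified condition that $\Ext_R^{n+1}(-,-)$ is uniformly $S$-torsion. The only difference is cosmetic: where you simply cite the $(3)\Rightarrow(1)$ directions of those two propositions, the paper re-proves $(4)\Rightarrow(2)$ and $(4)\Rightarrow(5)$ by explicit dimension shifting (with a slip in the variance, writing $\Ext_R^1(N,F_n)$ where $\Ext_R^1(F_n,N)$ is meant), so your version is, if anything, the cleaner bookkeeping.
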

\begin{proof}
$(1) \Rightarrow  (2)$ and $(3)\Rightarrow  (4)$: Trivial

$(2) \Rightarrow (3)$ and $ (5)\Rightarrow  (3) $: Follows from Proposition \ref{s-projective d}.

$(4) \Rightarrow  (2)$: Let $M$ be an $R$-module and $0 \rightarrow F_n \rightarrow ...\rightarrow F_1\rightarrow F_0\rightarrow M\rightarrow 0$ an exact sequence, where $F_0, F_1, . . . , F^{n-1}$ are projective $R$-modules.
To complete the proof, it suffices, by Proposition \ref{s-projective d}, to prove that $F_n$ is
$S$-projective. Let $N$ be an $R$-module.
Thus $S$-$pd_R(N)\leq n$ by (4). It follows from Corollary \ref{big-Tor} that $\Ext_R^1 (N, F_n)\cong \Ext_R^{n+1}(N, M)$ is uniformly $S$-torsion. Thus $F_n$ is $S$-projective.

$(4) \Rightarrow  (5)$: Let $M$ be an $R$-module and $0 \rightarrow M \rightarrow E_0\rightarrow...\rightarrow  E_{n-1}\rightarrow E_n\rightarrow 0$ an exact sequence with $E_0, E_1, . . . , E_{n-1}$ are injective $R$-modules. By dimension shifting, we have  $\Ext_R^{n+1}(M, N)\cong \Ext_R^{1}( E_n, N)$. So $\Ext_R^{1}(E_n, N)$ is uniformly $S$-torsion for any $R$-module $N$. Thus $E_n$ is $S$-injective by \cite[Theorem 4.3]{QKWCZ21} . Consequently,  $S$-$id_R(M)\leq n$ by Theorem \ref{s-inj-d}.
\end{proof}

Consequently, we have $S$-gl.dim$(R) = \sup\{S$-$pd_R(M) | M $ is an $R$-module$\}= \sup\{S$-$id_R(M) | M $ is an $R$-module$\}.$

Let $\p$ be a prime ideal of a ring $R$ and $\p$-$gl.dim(R)$ denote $(R-\p)$-$gl.dim(R)$ briefly. By Proposition \ref{pd-spd}, we have a new local characterization of  global dimensions of commutative rings.
\begin{corollary}\label{wgld-swgld}
Let $R$ be a ring. Then
\begin{align*}
&gl.dim(R)=\sup\{\p\mbox{-}gl.dim(R)|\p\in \Spec(R)\}=\sup\{\m\mbox{-}gl.dim(R)|\m\in \Max(R)\} .
\end{align*}
\end{corollary}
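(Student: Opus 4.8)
The plan is to reduce this statement entirely to the module-level result already established in Proposition \ref{pd-spd}, combined with the definition of $S$-global dimension and its characterization in Proposition \ref{w-g-projective}. I would prove only the first equality $gl.dim(R)=\sup\{\p\mbox{-}gl.dim(R)\mid\p\in\Spec(R)\}$, since the second (involving $\Max(R)$) follows by the identical argument using the maximal-ideal version of Proposition \ref{pd-spd}, and the chain of inequalities $\sup_{\m}\leq\sup_{\p}\leq gl.dim(R)$ is immediate because $\Max(R)\subseteq\Spec(R)$ and each localized dimension is bounded by the global one.

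The key observation is that $\p\mbox{-}gl.dim(R)$ is by definition $\sup\{\p\mbox{-}pd_R(M)\mid M\text{ an }R\text{-module}\}$, which itself equals $\sup\{(R-\p)\mbox{-}pd_R(M)\mid M\}$. First I would take the supremum of the pointwise identity in Proposition \ref{pd-spd} over all $R$-modules $M$. That proposition gives, for each fixed $M$,
\[
pd_R(M)=\sup\{\p\mbox{-}pd_R(M)\mid\p\in\Spec(R)\}.
\]
Taking $\sup_M$ on the left yields $gl.dim(R)$. The crux is to justify interchanging the two suprema on the right, i.e. to show
\[
\sup_M\,\sup_{\p}\,\p\mbox{-}pd_R(M)=\sup_{\p}\,\sup_M\,\p\mbox{-}pd_R(M)=\sup_{\p}\,\p\mbox{-}gl.dim(R).
\]
This interchange is valid for arbitrary suprema of a two-parameter family of extended natural numbers, so it requires no extra hypotheses; both sides equal the supremum of the single doubly-indexed family $\{\p\mbox{-}pd_R(M)\}_{M,\p}$.

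The one point deserving care—and the step I expect to be the main obstacle, though it is minor—is the direction $gl.dim(R)\leq\sup_{\p}\p\mbox{-}gl.dim(R)$ when the right-hand supremum is finite, say equal to $n$. Here I would argue directly via the Ext-characterizations rather than relying on the supremum interchange alone, to mirror the covering argument already used inside Proposition \ref{pd-spd}. If $n<\infty$, then $\p\mbox{-}pd_R(M)\leq n$ for every $M$ and every prime $\p$, so by Proposition \ref{s-projective d} applied to $S=R-\p$, for each $M,N$ and each $\p$ there is $s^{\p}\in R-\p$ with $s^{\p}\Ext_R^{n+1}(M,N)=0$. Since the ideal generated by all such $s^{\p}$ (as $\p$ ranges over $\Spec(R)$, or equivalently over $\Max(R)$) is the unit ideal, we conclude $\Ext_R^{n+1}(M,N)=0$ for all $M,N$, whence $gl.dim(R)\leq n$ by Proposition \ref{w-g-projective}. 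The infinite case is handled by contraposition exactly as in Proposition \ref{pd-spd}. This gives both inequalities and completes the proof.
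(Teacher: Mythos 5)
Your proof is correct and follows essentially the same route as the paper, which states the corollary as an immediate consequence of Proposition \ref{pd-spd}: taking suprema over all modules $M$ in the pointwise identity $pd_R(M)=\sup\{\p\mbox{-}pd_R(M)\mid\p\in \Spec(R)\}$ and interchanging the two suprema (always valid for a doubly indexed family in the extended naturals). Your additional covering argument via the elements $s^{\p}$ is a harmless redundancy, since the supremum interchange alone already yields both inequalities; it merely re-runs the argument internal to Proposition \ref{pd-spd}.
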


Recall from \cite{zwz21-p} that an $R$-module $M$ is called $S$-semisimple provided that any $S$-short exact sequence $0\rightarrow A\rightarrow M\rightarrow C\rightarrow 0$ is $S$-split. And $R$ is called an $S$-semisimple ring provided that any free  $R$-module is $S$-semisimple. Thus by \cite[Theorem 3.5]{zwz21-p}, the following result holds.
\begin{corollary}\label{s-vn-ext-char}
Let $R$ be a ring and $S$ a multiplicative subset of $R$. The following assertions are equivalent:
\begin{enumerate}
\item $R$ is an $S$-semisimple ring;
\item every $R$-module is $S$-semisimple;
\item every $R$-module is $S$-projective;
\item  every $R$-module is $S$-injective;
\item $R$ is uniformly $S$-Noetherian and  $S$-von Neumann regular;
\item  there exists an element $s\in S$ such that for any ideal $I$ of $R$ there is an $R$-homomorphism $f_I: R\rightarrow I$ satisfying $f_I(i)=si$ for any $i\in I$.
\item  $S$-gl.dim$(R)=0$.
\end{enumerate}
\end{corollary}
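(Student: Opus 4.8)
The equivalences $(1)\Leftrightarrow(2)\Leftrightarrow(3)\Leftrightarrow(4)\Leftrightarrow(5)\Leftrightarrow(6)$ are precisely the content of \cite[Theorem 3.5]{zwz21-p}, so the only genuinely new work is to fold the dimension-theoretic condition $(7)$ into this chain. My plan is therefore to establish the single equivalence $(3)\Leftrightarrow(7)$ and to invoke the cited theorem for everything else. The natural bridge is Proposition \ref{w-g-projective} specialized to $n=0$, which already packages together the projective and injective sides; once $(7)$ is attached to $(3)$, the symmetric clause of that proposition attaches it to $(4)$ as well.

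The key step runs as follows. Taking $n=0$ in Proposition \ref{w-g-projective} gives that $S$-gl.dim$(R)\leq 0$ holds if and only if $S$-$pd_R(M)\leq 0$ for every $R$-module $M$. Since an $S$-projective dimension is by construction a nonnegative quantity, the inequality $S$-$pd_R(M)\leq 0$ is the same as the equality $S$-$pd_R(M)=0$, and, as recorded immediately after the definitions of $S$-projective dimension, $S$-$pd_R(M)=0$ is equivalent to $M$ being $S$-projective. Chaining these statements yields exactly $(7)\Leftrightarrow(3)$; reading off the $S$-injective clause of Proposition \ref{w-g-projective} in the same way gives $(7)\Leftrightarrow(4)$. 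Thus $(7)$ is inserted into the cited chain of equivalences, and the corollary follows.

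I do not expect a substantive obstacle here: the heavy lifting, namely the equivalence of the six module-theoretic and ring-theoretic conditions $(1)$–$(6)$, is supplied wholesale by \cite[Theorem 3.5]{zwz21-p}, and the present contribution is only the reformulation $(7)$ in terms of the newly introduced $S$-global dimension. The single point that warrants a moment's care is the passage from the \emph{equality} $S$-gl.dim$(R)=0$ asserted in $(7)$ to the \emph{inequality} $S$-gl.dim$(R)\leq 0$ demanded by Proposition \ref{w-g-projective}; this is harmless, since the supremum defining $S$-gl.dim$(R)$ is taken over nonnegative integers, so $\leq 0$ and $=0$ coincide.
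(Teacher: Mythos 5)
Your proposal is correct and matches the paper's route exactly: the paper also cites \cite[Theorem 3.5]{zwz21-p} wholesale for the equivalence of $(1)$--$(6)$ and attaches $(7)$ via Proposition \ref{w-g-projective} at $n=0$ (the paper leaves this step implicit, while you spell out the harmless passage between $S$-gl.dim$(R)=0$, $S$-gl.dim$(R)\leq 0$, and $S$-$pd_R(M)=0$ for all $M$). Nothing further is needed.
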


The following example shows that the global dimension of rings and the $S$-global dimension of rings can are be wildly different.
\begin{example} Let $T=\mathbb{Z}_2\times \mathbb{Z}_2$ be a semi-simple ring and $s=(1,0)\in T$. Let $R=T[x]/\langle sx,x^2\rangle$ with $x$ the indeterminate  and $S=\{1,s\}$ be a multiplicative subset of $R$. Then $S$-gl.dim$(R)=0$ by \cite[Theorem 3.5]{zwz21-p}. Since $R$ is a non-reduced noetherian ring, gl.dim$(R)=\infty$ by \cite[Corollary 4.2.4]{g}.
\end{example}

\section{$S$-global dimensions of factor rings and  polynomial rings}

In this section, we mainly consider the $S$-global dimensions of factor rings and  polynomial rings. Firstly, we give an inequality of $S$-global dimensions for ring homomorphisms. Let $\theta:R\rightarrow T$ be a ring homomorphism. Suppose $S$ is a multiplicative subset of $R$, then $\theta(S)=\{\theta(s)|s\in S\}$  is a multiplicative subset of $T$.

\begin{proposition}\label{spd-changring}
Let $\theta:R\rightarrow T$ be a ring homomorphism, $S$ a multiplicative subset of $R$. Suppose $M$ is an $T$-module. Then
\begin{center}
$S$-$pd_{R}(M)\leq\theta(S)$-$pd_{T}(M)+S$-$pd_{R}(T).$
\end{center}
\end{proposition}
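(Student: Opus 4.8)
The plan is to run the $S$-analogue of the classical change-of-rings estimate $pd_R(M)\le pd_T(M)+pd_R(T)$, replacing every exact sequence by an $S$-exact one and every vanishing by uniform $S$-torsion. Write $m=\theta(S)\mbox{-}pd_T(M)$ and $n=S\mbox{-}pd_R(T)$; if either is infinite there is nothing to prove, so assume both are finite. The strategy has two parts: first, produce a length-$m$ resolution of $M$ over $T$ whose terms, viewed as $R$-modules via $\theta$, all have $S$-projective dimension at most $n$; second, splice this resolution into short $S$-exact sequences and feed them one at a time into the subadditivity estimate Proposition \ref{spd-exact}(1) to transfer the bound to $M$.

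The technical heart is the claim that \emph{every $\theta(S)$-projective $T$-module $Q$ satisfies $S\mbox{-}pd_R(Q)\le S\mbox{-}pd_R(T)=n$}. I would prove it in two steps. First, for a free $T$-module $G=T^{(I)}$ one has $\Ext_R^{n+1}(G,N)\cong\prod_{I}\Ext_R^{n+1}(T,N)$ for every $R$-module $N$; since $S\mbox{-}pd_R(T)\le n$, Proposition \ref{s-projective d} supplies, for each fixed $N$, a single $s_N\in S$ annihilating $\Ext_R^{n+1}(T,N)$, and that same $s_N$ annihilates the whole product, whence $\Ext_R^{n+1}(G,N)$ is uniformly $S$-torsion and $S\mbox{-}pd_R(G)\le n$. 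It is precisely here that the single-element (``uniform'') nature of the torsion is indispensable, since an ordinary infinite direct sum of torsion modules need not be killed by one element. Second, pick a surjection $g\colon G\to Q$ from a free $T$-module $G$; since $Q$ is $\theta(S)$-projective, the exact sequence $0\to\Ker g\to G\xrightarrow{g}Q\to 0$ is $\theta(S)$-split, so (as in the proof of Proposition \ref{spd-sp-exact}) there is a $T$-homomorphism $g'\colon Q\to G$ with $g\circ g'=\theta(s_0)\Id_Q$ for some $s_0\in S$. The crucial observation is that, because the $R$-action on any $T$-module factors through $\theta$, the $T$-endomorphism $\theta(s_0)\Id_Q$ is nothing but $s_0\Id_Q$ as an $R$-endomorphism. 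Applying $\Ext_R^{n+1}(-,N)$ yields $(g')^{\ast}\circ g^{\ast}=s_0\cdot$ on $\Ext_R^{n+1}(Q,N)$; combining this with $s_1\Ext_R^{n+1}(G,N)=0$ gives $s_0s_1\Ext_R^{n+1}(Q,N)=0$, so $\Ext_R^{n+1}(Q,N)$ is uniformly $S$-torsion and $S\mbox{-}pd_R(Q)\le n$ by Proposition \ref{s-projective d}.

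With the claim in hand the assembly is routine. Since $\theta(S)\mbox{-}pd_T(M)\le m$, Proposition \ref{s-projective d}(9), applied over $T$ with the multiplicative set $\theta(S)$, provides an honest exact sequence of $T$-modules
\[
0\to Q_m\to P_{m-1}\to\cdots\to P_0\to M\to 0,
\]
with $P_0,\dots,P_{m-1}$ projective $T$-modules and $Q_m$ a $\theta(S)$-projective $T$-module. Each $P_i$ is in particular $\theta(S)$-projective, so the claim gives $S\mbox{-}pd_R(P_i)\le n$ and $S\mbox{-}pd_R(Q_m)\le n$. Splicing this (ordinary, hence $S$-exact) resolution into short exact sequences $0\to Z_i\to P_i\to Z_{i-1}\to 0$ for $i=0,\dots,m-1$, with $Z_{-1}=M$ and $Z_{m-1}\cong Q_m$, and feeding them successively into Proposition \ref{spd-exact}(1) gives $S\mbox{-}pd_R(Z_{m-2})\le n+1$, then $S\mbox{-}pd_R(Z_{m-3})\le n+2$, and so on, until the final step $0\to Z_0\to P_0\to M\to 0$ yields $S\mbox{-}pd_R(M)\le n+m=S\mbox{-}pd_R(T)+\theta(S)\mbox{-}pd_T(M)$, as asserted.

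The main obstacle is the claim, and inside it the passage from $\theta(S)$-torsion over $T$ to $S$-torsion over $R$. Once one notices that $\theta(s)\Id$ coincides with $s\Id$ on the underlying $R$-module, and that uniform torsion (unlike ordinary torsion) survives arbitrary products, the remainder is bookkeeping already packaged in Propositions \ref{s-projective d} and \ref{spd-exact}.
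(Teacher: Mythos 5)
Your proof is correct and follows essentially the same route as the paper's: a projective resolution of $M$ over $T$ is transferred to $R$ via the subadditivity estimate of Proposition \ref{spd-exact}(1), with the key input that $\theta(S)$-projective $T$-modules have $S$-$pd_R$ at most $S$-$pd_R(T)$ (the paper runs this as an induction on $\theta(S)$-$pd_T(M)$ rather than splicing one resolution, but the content is identical). If anything, your write-up is more careful than the paper's on two points it leaves implicit, namely that $S$-$pd_R$ of a free $T$-module is controlled by $S$-$pd_R(T)$ because uniform $S$-torsion survives arbitrary products, and that a $\theta(S)$-split sequence of $T$-modules is $S$-split over $R$ since $\theta(s)\Id$ restricts to $s\Id$ on the underlying $R$-module.
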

\begin{proof} Assume $\theta(S)$-$pd_{T}(M)=n<\infty$. If $n=0$, then $M$ is  $\theta(S)$-projective  over $T$.  Then there exists $\theta(S)$-split short exact sequence $0\rightarrow A\rightarrow F\rightarrow M\rightarrow 0$ with $F$ a free $R$-module of rank $\geq 1$.
By Proposition \ref{spd-sp-exact}, we have $\theta(S)$-$pd_{T}(F)\geq \theta(S)$-$pd_{T}(M)$. So  $S$-$pd_{R}(M)\leq S$-$pd_{R}(F)=S$-$pd_{R}(T)\leq n+S$-$pd_{R}(T)$.

Now we assume $n>0$. Let $0\rightarrow A\rightarrow F\rightarrow M\rightarrow 0$ be an exact sequence of $T$-modules, where $F$ is a free $T$-module of rank $\geq 1$. Then $\theta(S)$-$pd_{T}(A)=n-1$ by Corollary \ref{big-Tor} and Proposition \ref{s-projective d}. By induction, $S$-$pd_{R}(A)\leq n-1+S$-$pd_{R}(T)$. Note that $S$-$pd_{R}(T)=S$-$pd_{R}(F)$. By Proposition \ref{spd-exact}, we have
\begin{align*}
 S\mbox{-}pd_{R}(M)&\leq 1+\max\{S\mbox{-}pd_{R}(F), S\mbox{-}pd_{R}(A)\} \\
  &\leq 1+n-1+S\mbox{-}pd_{R}(T) \\
   &=\theta(S)\mbox{-}pd_{T}(M)+S\mbox{-}pd_{R}(T).
\end{align*}
\end{proof}

Let $R$ be a ring, $I$ an ideal of $R$ and  $S$  a multiplicative subset of $R$. Then $\pi:R\rightarrow R/I$ is a ring epimorphism and $\pi(S):=\overline{S}=\{s+I\in R/I|s\in S\}$ is naturally a multiplicative subset of $R/I$.

\begin{proposition}\label{s-pd-poly-3}
Let $R$ be a ring, $S$ a multiplicative subset of $R$. Let $a$ be a non-zero-divisor  in $R$ which does not divide any element in $S$. Written $\overline{R}=R/aR$ and $\overline{S}=\{s+aR\in \overline{R}|s\in S\}$. Then the following assertions hold.
\begin{enumerate}
\item  Let $M$ be a nonzero $\overline{R}$-module. If  $\overline{S}$-$pd_{\overline{R}}(M)<\infty$, then
\begin{center}
$S$-$pd_{R}(M)=\overline{S}$-$pd_{\overline{R}}(M)+1.$
\end{center}
\item   If $\overline{S}$-gl.dim$(\overline{R})<\infty$, then
\begin{center}
$S$-gl.dim$(R) \geq \overline{S}$-gl.dim$(\overline{R})+1$.
\end{center}
\end{enumerate}
\end{proposition}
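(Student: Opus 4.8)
The plan is to sandwich $S$-$pd_R(M)$ between $\overline{S}$-$pd_{\overline{R}}(M)+1$ on both sides, and then to read off (2) by feeding a well-chosen module into (1). The first preliminary step is to compute $S$-$pd_R(\overline{R})$. Since $a$ is a non-zero-divisor, $0\to R\xrightarrow{a}R\to\overline{R}\to 0$ is an $R$-projective resolution, so $pd_R(\overline{R})\le 1$ and $\Ext^n_R(\overline{R},N)=0$ for $n\ge 2$; in particular $S$-$pd_R(\overline{R})\le 1$. Applying $\Hom_R(-,N)$ to this resolution gives $\Ext^1_R(\overline{R},N)\cong N/aN$, and the choice $N=R$ yields $\Ext^1_R(\overline{R},R)\cong\overline{R}$; by \cite[Theorem 2.5]{zwz21-p}, $\overline{R}$ being $S$-projective would force $\overline{R}$ to be uniformly $S$-torsion, i.e. $s\in aR$ for some $s\in S$, which the hypothesis $a\nmid s$ forbids. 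Hence $S$-$pd_R(\overline{R})=1$. The upper bound in (1) is then immediate from Proposition \ref{spd-changring} applied to $\pi\colon R\to\overline{R}$ (so $\pi(S)=\overline{S}$): $S$-$pd_R(M)\le \overline{S}$-$pd_{\overline{R}}(M)+S$-$pd_R(\overline{R})=\overline{S}$-$pd_{\overline{R}}(M)+1$.

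For the reverse inequality set $m=\overline{S}$-$pd_{\overline{R}}(M)$ and first treat $m\ge 1$. I would compare $\Ext_R(M,-)$ with $\Ext_{\overline{R}}(M,-)$ via the Cartan--Eilenberg change-of-rings spectral sequence $\Ext^p_{\overline{R}}(M,\Ext^q_R(\overline{R},N))\Rightarrow\Ext^{p+q}_R(M,N)$; because $pd_R(\overline{R})=1$, only the rows $q=0,1$ survive and it collapses to a long exact sequence. For an $\overline{R}$-module $N$ one has $\Ext^0_R(\overline{R},N)\cong N\cong\Ext^1_R(\overline{R},N)$, and the relevant segment reads
$$\Ext^{m+1}_R(M,N)\xrightarrow{\ \psi\ }\Ext^{m}_{\overline{R}}(M,N)\xrightarrow{\ d_2\ }\Ext^{m+2}_{\overline{R}}(M,N),\qquad \Im\psi=\ker d_2.$$
Since $\overline{S}$-$pd_{\overline{R}}(M)=m$ is not $\le m-1$, Proposition \ref{s-projective d} (over $\overline{R}$, with $\overline{S}$) supplies an $\overline{R}$-module $N$ for which $\Ext^{m}_{\overline{R}}(M,N)$ is not uniformly $\overline{S}$-torsion, while $\Ext^{m+2}_{\overline{R}}(M,N)$ is killed by some $s\in S$. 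Then $s\cdot\Ext^m_{\overline{R}}(M,N)\subseteq\ker d_2=\Im\psi$ is not uniformly $S$-torsion, so $\Ext^{m+1}_R(M,N)$ surjects onto a module that is not uniformly $S$-torsion and hence is itself not uniformly $S$-torsion; by Proposition \ref{s-projective d} this gives $S$-$pd_R(M)\ge m+1$, and with the upper bound equality follows. (Throughout I use that for an $\overline{R}$-module uniform $\overline{S}$-torsion and uniform $S$-torsion coincide; one can also replace the spectral sequence by the snake-lemma sequence $0\to M\to K/aK\to P/aP\to M\to 0$ attached to an $R$-presentation of $M$.)

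The case $m=0$, where $M$ is $\overline{S}$-projective, is the delicate one and is where I expect the genuine obstacle. One must show $S$-$pd_R(M)=1$, i.e. that $M$ is not $S$-projective over $R$. The mechanism is direct: if $M$ were $S$-projective, then for an $R$-presentation $0\to K\to F\xrightarrow{g}M\to 0$ with $F$ free, uniform $S$-torsion of $\Ext^1_R(M,K)$ yields $s\in S$ and $h\colon M\to F$ with $g\circ h=s\,\Id_M$; but $aM=0$ gives $a\,h(m)=h(am)=0$, and as $a$ is a non-zero-divisor on the free module $F$ we get $h(M)\subseteq\{x\in F:ax=0\}=0$, whence $sM=0$. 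Thus an $\overline{S}$-projective $\overline{R}$-module fails to be $S$-projective over $R$ \emph{precisely when it is not uniformly $S$-torsion} (a nonzero uniformly $S$-torsion $\overline{R}$-module has $S$-$pd_R=0=\overline{S}$-$pd_{\overline{R}}$ simultaneously); since $m\ge 1$ already forces $M$ not uniformly $S$-torsion, this is the one point I would make explicit in the borderline case $m=0$. Granting this, (1) is complete, and (2) follows by choosing a test module: if $\overline{S}$-gl.dim$(\overline{R})=m<\infty$, take an $\overline{R}$-module $M$ with $\overline{S}$-$pd_{\overline{R}}(M)=m$ when $m\ge 1$ (the supremum of integers being attained) and $M=\overline{R}$ when $m=0$; in either case $M$ is not uniformly $S$-torsion, so (1) gives $S$-$pd_R(M)=m+1$, and therefore $S$-gl.dim$(R)\ge S$-$pd_R(M)=\overline{S}$-gl.dim$(\overline{R})+1$.
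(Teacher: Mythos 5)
Your proposal is correct, and on the decisive half of the argument it takes a genuinely different --- and in fact sounder --- route than the paper. The upper bound is identical in both: compute $S$-$pd_R(\overline{R})=1$ (your verification via $\Ext^1_R(\overline{R},R)\cong \overline{R}$ and \cite[Theorem 2.5]{zwz21-p} is a fleshed-out version of the paper's one-line claim that $0\rightarrow aR\rightarrow R\rightarrow \overline{R}\rightarrow 0$ does not $S$-split), then apply Proposition \ref{spd-changring}. For the lower bound, however, the paper argues as follows: since $\overline{S}$-$pd_{\overline{R}}(M)=n$, ``there is an \emph{injective} $\overline{R}$-module $C$ such that $\Ext^n_{\overline{R}}(M,C)$ is not uniformly $\overline{S}$-torsion,'' and this is then transferred along $\Ext^{n+1}_R(M,E)\cong \Ext^n_{\overline{R}}(M,C)$ using \cite[Theorem 2.4.22, Proposition 3.8.12(4)]{fk16}. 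For $n\geq 1$ no such $C$ can exist, because $\Ext^n_{\overline{R}}(M,C)=0$ for every injective $\overline{R}$-module $C$; the paper's choice of test module is therefore vacuous, and its proof of the reverse inequality breaks precisely at the step your argument replaces. Your mechanism --- the two-row change-of-rings spectral sequence $\Ext^p_{\overline{R}}(M,\Ext^q_R(\overline{R},N))\Rightarrow \Ext^{p+q}_R(M,N)$ collapsing (as $pd_R(\overline{R})=1$ and $\Ext^0_R(\overline{R},N)\cong N\cong \Ext^1_R(\overline{R},N)$ for $\overline{R}$-modules $N$) to the exact segment $\Ext^{m+1}_R(M,N)\xrightarrow{\psi}\Ext^m_{\overline{R}}(M,N)\xrightarrow{d_2}\Ext^{m+2}_{\overline{R}}(M,N)$, with a witness $N$ supplied by Proposition \ref{s-projective d} and the observation that $s\cdot \Ext^m_{\overline{R}}(M,N)\subseteq \Im\,\psi$ is not uniformly $S$-torsion --- is sound, as is your alternative via the snake-lemma sequence $0\rightarrow M\rightarrow K/aK\rightarrow P/aP\rightarrow M\rightarrow 0$.

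Your boundary analysis at $m=0$ is likewise substantive rather than pedantic: the hypothesis that $a$ divides no element of $S$ says only $S\cap aR=\emptyset$ and does \emph{not} exclude nonzero uniformly $S$-torsion $\overline{R}$-modules, for which assertion (1) literally fails. For instance, take $R=\mathbb{Z}$, $a=10$, $S=\{6^k : k\geq 0\}$ and $M=\mathbb{Z}/2\mathbb{Z}$: here $a$ is a non-zero-divisor dividing no $6^k$, yet $6M=0$, so $S$-$pd_R(M)=\overline{S}$-$pd_{\overline{R}}(M)=0$ by Lemma \ref{s-iso-pd}, not $1$. Your retraction argument ($g\circ h=s\Id_M$ forces $a\,h(m)=h(am)=0$, hence $h=0$ into the free module and $sM=0$) pins down ``$M$ not uniformly $S$-torsion'' as exactly the missing hypothesis under which (1) is true, and you correctly note that this is automatic when $m\geq 1$ and can always be arranged for the witness needed in (2) (take $M=\overline{R}$ when $\overline{S}$-gl.dim$(\overline{R})=0$, which is not uniformly $S$-torsion precisely by the divisibility hypothesis). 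Consequently part (2) --- the only part invoked in Theorem \ref{s-wgd-poly} --- stands. In short, your blind attempt not only proves the result by a different method, it repairs two defects the paper's own proof passes over.
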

\begin{proof} (1) Set $\overline{S}$-$pd_{\overline{R}}(M)=n$. Since  $a$ is a non-zero-divisor   which does not divide any element in $S$, then the exact sequence $0\rightarrow aR\rightarrow R\rightarrow R/aR\rightarrow 0$ does not $S$-split. Thus $S$-$pd_R(\overline{R})=1$. By Proposition \ref{spd-changring}, we have $S$-$pd_R(M)\leq \overline{S}$-$pd_{\overline{R}}(M)+1=n+1.$ Since  $\overline{S}$-$pd_{\overline{R}}(M)=n$, then there is an injective $\overline{R}$-module $C$ such that $\Ext^n_{\overline{R}}(M,C)$ is not uniformly $\overline{S}$-torsion. By \cite[Theorem 2.4.22]{fk16}, there is an injective $R$-module $E$ such that $0\rightarrow C\rightarrow E\rightarrow E\rightarrow 0$ is exact. By \cite[Proposition 3.8.12(4)]{fk16}, $\Ext^{n+1}_{R}(M,E)\cong \Ext^n_{\overline{R}}(M,C)$. Thus $\Ext^{n+1}_{R}(M,E)$ is not uniformly $S$-torsion. So $S$-$pd_{R}(M)=\overline{S}$-$pd_{\overline{R}}(M)+1$.

(2) Let $n=\overline{S}$-gl.dim$(\overline{R})$. Then there is a nonzero $\overline{R}$-module $M$ such that $\overline{S}$-$pd_{\overline{R}}(M)=n$. Thus $S$-$pd_{R}(M)=n+1$ by (1). So $S$-gl.dim$(R) \geq \overline{S}$-gl.dim$(\overline{R})+1$.
\end{proof}

Let $R$ be a ring and $M$ an $R$-module. $R[x]$ denotes the  polynomial ring with one indeterminate, where all coefficients are in $R$. Set $M[x]=M\otimes_RR[x]$, then $M[x]$ can be seen as an $R[x]$-module naturally. It is well-known gl.dim$(R[x])=$gl.dim$(R)$ (see \cite[Theorem 3.8.23]{fk16}). In this section, we give a $S$-analogue of this result. Let $S$ be a multiplicative subset of $R$, then $S$ is a multiplicative subset of $R[x]$ naturally.

\begin{lemma}\label{tor-s-poly}
Let $R$ be a ring, $S$ a multiplicative subset of $R$. Suppose $T$ is an $R$-module and  $F$ is an $R[x]$-module.   If  $P$ is $S$-projective over $R[x]$, then $P$ is  $S$-projective over $R$.

\end{lemma}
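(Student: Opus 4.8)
The plan is to transfer an $S$-split free presentation of $P$ over $R[x]$ down to $R$ along the inclusion $R\hookrightarrow R[x]$, exploiting that $R[x]$ is a free $R$-module. First I would choose a free $R[x]$-module $F$ together with an epimorphism $g\colon F\to P$. The resulting short exact sequence $0\to \Ker(g)\to F\xrightarrow{g} P\to 0$ of $R[x]$-modules is in particular $S$-exact, so since $P$ is $S$-projective over $R[x]$ it must be $S$-split. Reading off the section form of $S$-splitness, this yields an element $s\in S$ and an $R[x]$-homomorphism $h\colon P\to F$ with $g\circ h=s\,\Id_P$ (for an honestly exact sequence the left $S$-inverse of the inclusion provided by \cite[Definition 2.1]{zwz21-p} produces such an $h$ by the usual diagram chase).

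Next I would restrict scalars along $R\hookrightarrow R[x]$. Since $R[x]=\bigoplus_{n\ge 0}Rx^n$ is a free $R$-module, every free $R[x]$-module is free, hence projective, as an $R$-module; in particular $F$ is $R$-projective, so $\Ext^1_R(F,N)=0$ for every $R$-module $N$. The maps $g$ and $h$ are $R$-linear, and multiplication by $s\in S\subseteq R$ is literally the same operation on the $\Ext$ groups whether they are computed over $R$ or over $R[x]$.

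Then, for an arbitrary $R$-module $N$, I would apply the contravariant functor $\Ext^1_R(-,N)$ to the relation $g\circ h=s\,\Id_P$, obtaining that the composite
\[
\Ext^1_R(P,N)\xrightarrow{\Ext^1_R(g,N)}\Ext^1_R(F,N)\xrightarrow{\Ext^1_R(h,N)}\Ext^1_R(P,N)
\]
equals multiplication by $s$. As this composite factors through $\Ext^1_R(F,N)=0$, we get $s\,\Ext^1_R(P,N)=0$, so $\Ext^1_R(P,N)$ is uniformly $S$-torsion, and with the same $s$ for every $N$. By the characterization of $S$-projective modules recalled in the excerpt (\cite[Theorem 2.5]{zwz21-p}), $P$ is then $S$-projective over $R$, which is the assertion.

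I expect the only genuinely delicate point to be the first step: extracting the section $h$ from the $S$-split hypothesis as it is phrased (via a left $S$-inverse of the inclusion) and confirming that a single distinguished element $s$ serves uniformly for all test modules $N$. The change-of-rings part is routine once one records that freeness of $R[x]$ over $R$ is inherited by all free $R[x]$-modules. As an alternative that avoids producing $h$ explicitly, one may instead feed the retraction directly into the classical long exact $\Ext_R(-,N)$-sequence of $0\to \Ker(g)\to F\to P\to 0$ and observe that the relevant cokernel $\Coker(\Ext^0_R(g,N))\cong \Ext^1_R(P,N)$ is killed by $s$.
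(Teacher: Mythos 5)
Your proof is correct and follows essentially the same route as the paper: both take a free presentation $0\to K\to F\xrightarrow{g} P\to 0$ over $R[x]$, use the $S$-projectivity of $P$ to obtain the $S$-splitting $g\circ h=s\,\Id_P$, and then restrict scalars along $R\hookrightarrow R[x]$, observing that $F$ remains free over $R$. The only difference is in the final step, where the paper cites the closure property \cite[Proposition 2.8]{zwz21-p} for the $S$-split sequence over $R$, while you verify $S$-projectivity directly via the $\Ext^1_R(P,-)$ uniform-torsion characterization \cite[Theorem 2.5]{zwz21-p}---a slightly more self-contained finish that also makes explicit that a single $s\in S$ works for all test modules $N$.
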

\begin{proof}Suppose $P$ is an $S$-projective $R[x]$-module. Then there exists a free  $R[x]$-module $F$ and a  $S$-split $R[x]$-short exact sequence $0\rightarrow K\rightarrow F\xrightarrow{\pi} P\rightarrow 0$. Thus we have an $R[x]$-homomorphism $\pi':P\rightarrow F$ such that $\pi\circ\pi'=s\Id_P$ for some $s\in S$. Note that $\pi'$ is also an  $R$-homomorphism.  So  $0\rightarrow K\rightarrow F\xrightarrow{\pi} P\rightarrow 0$ is also $S$-split over $R$. Note that $F$ is also a  free $R$-module. So $P$ is  $S$-projective over $R$ by \cite[Proposition 2.8]{zwz21-p}.

\end{proof}

\begin{proposition}\label{s-pd-poly}
Let $R$ be a ring, $S$ a multiplicative subset of $R$ and $M$ an $R$-module. Then $S$-$pd_{R[x]}(M[x])=S$-$pd_R(M)$.
\end{proposition}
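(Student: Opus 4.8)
The plan is to prove the two inequalities $S\text{-}pd_{R[x]}(M[x]) \le S\text{-}pd_R(M)$ and $S\text{-}pd_R(M) \le S\text{-}pd_{R[x]}(M[x])$ separately, each valid also when the relevant dimension is infinite, so that the stated equality follows in all cases. Throughout I use that $R[x]$ is a free, hence flat, $R$-module, that $F_i\otimes_R R[x]=F_i[x]$ and $M\otimes_R R[x]=M[x]$, and the equivalent characterisations of $S$-projective dimension in Proposition \ref{s-projective d}.

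For the first inequality, suppose $S\text{-}pd_R(M)=n<\infty$. By Proposition \ref{s-projective d} there is an exact sequence $0\to F_n\to\cdots\to F_0\to M\to 0$ in which $F_0,\dots,F_{n-1}$ are projective over $R$ and $F_n$ is $S$-projective over $R$. Applying the exact functor $-\otimes_R R[x]$ yields an exact sequence $0\to F_n[x]\to\cdots\to F_0[x]\to M[x]\to 0$ of $R[x]$-modules. Each $F_i[x]$ with $i<n$ is projective over $R[x]$ (a summand of a free $R$-module tensors up to a summand of a free $R[x]$-module). The crux is to check that $F_n[x]$ is $S$-projective over $R[x]$: choosing a free $R$-module $G$ with an exact sequence $0\to K\to G\xrightarrow{\pi}F_n\to 0$, the $S$-projectivity of $F_n$ provides an $R$-homomorphism $\pi'\colon F_n\to G$ with $\pi\circ\pi'=s\,\Id_{F_n}$ for some $s\in S$; tensoring with $R[x]$ produces a section $\pi'\otimes 1$ of the free cover $G[x]\to F_n[x]$ with $(\pi\otimes 1)\circ(\pi'\otimes 1)=s\,\Id_{F_n[x]}$, so $F_n[x]$ is $S$-projective by \cite[Proposition 2.8]{zwz21-p}. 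Then Proposition \ref{s-projective d} gives $S\text{-}pd_{R[x]}(M[x])\le n$.

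For the reverse inequality, suppose $S\text{-}pd_{R[x]}(M[x])=n<\infty$ and take an $S$-projective $S$-resolution $0\to P_n\to\cdots\to P_0\to M[x]\to 0$ over $R[x]$. By Lemma \ref{tor-s-poly} every $P_i$ is $S$-projective over $R$, and since each defining element $s\in S$ already lies in $R$ and the kernels and images are unchanged when the sequence is regarded over $R$, this is an $S$-projective $S$-resolution of $M[x]$ over $R$; hence $S\text{-}pd_R(M[x])\le n$. Finally, as an $R$-module $M[x]=\bigoplus_{i\ge 0}Mx^i$, so $M$ is a direct summand of $M[x]$ over $R$; for any $R$-module $N$ the group $\Ext^{n+1}_R(M,N)$ is then a direct summand of $\Ext^{n+1}_R(M[x],N)$, and a direct summand of a uniformly $S$-torsion module is again uniformly $S$-torsion. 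By Proposition \ref{s-projective d} this gives $S\text{-}pd_R(M)\le S\text{-}pd_R(M[x])\le n$.

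Combining the two inequalities yields equality whenever both sides are finite, and the same two implications show that the dimensions are finite together and hence infinite together, completing all cases. I expect the only genuine obstacle to be the passage of $S$-projectivity \emph{up} along $-\otimes_R R[x]$ (the step producing $F_n[x]$), since the downward passage is exactly Lemma \ref{tor-s-poly}; the remaining ingredients — flatness of $R[x]$, invariance of $S$-exactness under restriction of scalars to $R$, and the direct-summand monotonicity of $S\text{-}pd$ — are routine.
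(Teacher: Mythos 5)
Your proof is correct and follows essentially the same route as the paper's, but it is worth recording where you diverge, since in one place you fill a step the paper leaves implicit. For the inequality $S$-$pd_{R[x]}(M[x])\leq S$-$pd_R(M)$, the paper tensors a full $S$-projective $S$-resolution of $M$ with $R[x]$, invokes $S$-flatness of $R[x]$ to keep the sequence $S$-exact, and concludes at once from Proposition \ref{s-projective d} --- silently using that each $P_i[x]$ remains $S$-projective over $R[x]$. You instead start from characterization (9) of Proposition \ref{s-projective d} (projective $F_0,\dots,F_{n-1}$ with $S$-projective top term $F_n$), so that after the flat base change only $F_n[x]$ needs attention, and you verify its $S$-projectivity explicitly by transporting the $S$-splitting $\pi\circ\pi'=s\,\Id_{F_n}$ along $-\otimes_R R[x]$ and citing \cite[Proposition 2.8]{zwz21-p}; this is exactly the mirror of the paper's own Lemma \ref{tor-s-poly} and makes the ascent of $S$-projectivity along $R\to R[x]$ rigorous where the paper glosses it. For the reverse inequality both arguments restrict an $R[x]$-resolution to $R$ via Lemma \ref{tor-s-poly} and exploit the $R$-decomposition $M[x]=\bigoplus_{i\geq 0}Mx^i$; the paper uses the full product formula $\Ext^{n+1}_R(M[x],N)\cong\prod_{i=0}^{\infty}\Ext^{n+1}_R(M,N)$, while you only use that $M$ is an $R$-direct summand of $M[x]$, so $\Ext^{n+1}_R(M,N)$ is a summand of the uniformly $S$-torsion module $\Ext^{n+1}_R(M[x],N)$ --- the same idea, in a slightly more economical form, and with the advantage that the uniform bound $s$ is visibly inherited by the summand. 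Your explicit treatment of the infinite case is also cleaner than the paper's, which handles it tacitly.
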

\begin{proof}

Assume that $S$-$pd_R(M)\leq n$.  Then $M$ has an $S$-projective resolution over $R$:
$$0\rightarrow P_n\rightarrow\cdots\rightarrow P_1\rightarrow P_0\rightarrow M\rightarrow0.$$
Since $R[x]$ is free over $R$, $R[x]$ is an $S$-flat $R$-module by \cite[Proposition 2.7]{zwz21-p}. Thus the natural sequence $$0\rightarrow P_n[x]\rightarrow\cdots\rightarrow P_1[x]\rightarrow P_0[x]\rightarrow M[x]\rightarrow0$$ is $S$-exact over $R[x]$. Consequently, $S$-$pd_{R[x]}(M[x])\leq n$ by Proposition \ref{s-projective d}.

Let $0\rightarrow F_n \rightarrow ...\rightarrow F_1\rightarrow F_0\rightarrow M[x]\rightarrow 0$ be an exact sequence with each $F_i$ $S$-projective over $R[x]$ ($1\leq i\leq n$). Then it is also $S$-projective resolution of $M[x]$ over $R$  by Lemma \ref{tor-s-poly}. Thus $\Ext^{n+1}_R(M[x],N)$ is  uniformly $S$-torsion for any $R$-module $N$ by Proposition \ref{s-projective d}. It follows that  $s\Ext^{n+1}_R(M[x],N)=s \prod\limits_{i=1}^{\infty} \Ext^{n+1}_R(M,N)=0 $. Thus $\Ext^{n+1}_R(M,N)$ is  uniformly $S$-torsion. Consequently, $S$-$pd_R(M)\leq S$-$pd_{R[x]}(M[x])$ by Proposition \ref{s-projective d} again.
\end{proof}

Let $M$ be an $R[x]$-module then $M$ can be naturally viewed as an $R$-module. Define $\psi:M[x]\rightarrow M$ by $$
\psi(\sum\limits_{i=0}^nx^i\otimes m_i)=\sum\limits_{i=0}^nx^i m_i,\qquad m_i\in M.$$ And define $\varphi:M[x]\rightarrow M[x]$ by $$\varphi(\sum\limits_{i=0}^nx^i\otimes m_i)=\sum\limits_{i=0}^nx^{i+1}\otimes m_i-\sum\limits_{i=0}^nx^i\otimes xm_i,\qquad m_i\in M.$$

\begin{lemma}\cite[Theorem 3.8.22]{fk16}\label{exact-s-poly}
Let $R$ be a ring, $S$ a multiplicative subset of $R$. For any $R[x]$-module $M$, $$0\rightarrow M[x]\xrightarrow{\varphi} M[x]\xrightarrow{\psi} M\rightarrow 0$$
is exact.
\end{lemma}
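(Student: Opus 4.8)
Since the variable $x\in R[x]$ enters only through the module structure, the multiplicative set $S$ plays no role in this statement: the claim is that the displayed sequence is genuinely exact (not merely $S$-exact), and the plan is to prove it by purely module-theoretic means. First I would exploit the decomposition $M[x]=M\otimes_RR[x]=\bigoplus_{i\geq 0}(x^i\otimes M)$, under which every element is uniquely written $\sum_{i=0}^n x^i\otimes m_i$ with $m_i\in M$. Surjectivity of $\psi$ is immediate, since $\psi(1\otimes m)=m$ for every $m\in M$. The composite $\psi\circ\varphi=0$ is a one-line check: $\psi(x^{i+1}\otimes m-x^i\otimes xm)=x^{i+1}m-x^i(xm)=0$, so $\Im(\varphi)\subseteq\Ker(\psi)$.

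Next I would prove that $\varphi$ is injective by a leading-coefficient argument. Expanding $\varphi(\sum_{i=0}^n x^i\otimes m_i)$ and collecting terms by tensor-degree in the direct sum $\bigoplus_i(x^i\otimes M)$, the coefficient of $x^{n+1}$ is exactly $m_n$, the coefficient of $x^j$ for $1\leq j\leq n$ is $m_{j-1}-xm_j$, and the coefficient of $x^0$ is $-xm_0$. If $\varphi(\sum x^i\otimes m_i)=0$, each coefficient vanishes; from $m_n=0$ and the recursions $m_{j-1}=xm_j$ one descends to obtain $m_{n-1}=m_{n-2}=\cdots=m_0=0$. Hence $\varphi$ is injective.

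The main obstacle is the reverse inclusion $\Ker(\psi)\subseteq\Im(\varphi)$, which gives exactness in the middle. Here I would first establish the auxiliary identity $x^i\otimes m-1\otimes(x^im)\in\Im(\varphi)$ for all $i\geq 0$ and $m\in M$, by induction on $i$: the case $i=0$ is trivial, and for $i\geq 1$ one writes $x^i\otimes m=\varphi(x^{i-1}\otimes m)+x^{i-1}\otimes(xm)$ and applies the inductive hypothesis to $x^{i-1}\otimes(xm)$. Granting this, for any $u=\sum_{i=0}^n x^i\otimes m_i\in\Ker(\psi)$ I would write $u=\sum_i\bigl(x^i\otimes m_i-1\otimes x^im_i\bigr)+1\otimes\bigl(\sum_i x^im_i\bigr)$; the last term equals $1\otimes\psi(u)=0$, while each summand in the first sum lies in $\Im(\varphi)$ by the identity, so $u\in\Im(\varphi)$. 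The only delicate bookkeeping is keeping the formal tensor variable $x^i$ strictly separate from the $R[x]$-module action $m\mapsto xm$ on $M$; once that distinction is maintained, both the injectivity and the telescoping fall out cleanly.
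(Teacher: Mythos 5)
Your proof is correct and complete; note that the paper gives no argument of its own here, simply citing \cite[Theorem 3.8.22]{fk16}, and your argument (surjectivity via $1\otimes m$, injectivity by the leading-coefficient recursion in the decomposition $M[x]=\bigoplus_i x^i\otimes M$, and middle exactness via the telescoping identity $x^i\otimes m-1\otimes x^im\in\Im(\varphi)$) is essentially the standard textbook proof found in that reference. In particular you correctly observe that $S$ is irrelevant to the statement, which asserts genuine exactness, and your bookkeeping separating the tensor variable from the $R[x]$-action on $M$ is exactly the point on which the proof turns.
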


\begin{theorem}\label{s-wgd-poly}
Let $R$ be a ring, $S$ a multiplicative subset of $R$. Then $S$-gl.dim$(R[x])=S$-gl.dim$(R)+1$.
\end{theorem}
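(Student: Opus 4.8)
The plan is to prove the two inequalities separately, obtaining $S$-gl.dim$(R[x]) \geq S$-gl.dim$(R)+1$ and $S$-gl.dim$(R[x]) \leq S$-gl.dim$(R)+1$. The lower bound should follow almost immediately from the machinery already assembled. Observe that $x$ is a non-zero-divisor in $R[x]$ which does not divide any element of $S$ (every element of $S$ lies in $R$, i.e.\ has $x$-degree $0$, while any multiple of $x$ in $R[x]$ has positive degree), and the factor ring is $R[x]/xR[x] \cong R$ with $\overline{S}$ corresponding to $S$. Hence Proposition \ref{s-pd-poly-3}(2), applied to the ring $R[x]$ with the element $a=x$, gives $S\mbox{-gl.dim}(R[x]) \geq S\mbox{-gl.dim}(R)+1$ whenever $S\mbox{-gl.dim}(R)<\infty$; the case $S\mbox{-gl.dim}(R)=\infty$ forces $S\mbox{-gl.dim}(R[x])=\infty$ as well (again via Proposition \ref{s-pd-poly-3}(1), since arbitrarily long finite $S$-projective dimensions over $R$ lift to arbitrarily long ones over $R[x]$).

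For the upper bound, the plan is to set $n=S\mbox{-gl.dim}(R)$ (the finite case; the infinite case making the inequality vacuous) and show $S\mbox{-}pd_{R[x]}(M)\leq n+1$ for every $R[x]$-module $M$. The key device is the short exact sequence of Lemma \ref{exact-s-poly}, namely $0\rightarrow M[x]\xrightarrow{\varphi} M[x]\xrightarrow{\psi} M\rightarrow 0$, which expresses an arbitrary $R[x]$-module $M$ as the cokernel of an endomorphism of $M[x]$. Here $M$ is first regarded as an $R$-module, so that $S\mbox{-}pd_R(M)\leq n$, and then Proposition \ref{s-pd-poly} yields $S\mbox{-}pd_{R[x]}(M[x])=S\mbox{-}pd_R(M)\leq n$. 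Feeding the two outer terms of this (genuinely exact, hence $S$-exact) sequence into Proposition \ref{spd-exact}(1) gives
\begin{center}
$S\mbox{-}pd_{R[x]}(M)\leq 1+\max\{S\mbox{-}pd_{R[x]}(M[x]),\ S\mbox{-}pd_{R[x]}(M[x])\}\leq 1+n.$
\end{center}
Since $M$ was an arbitrary $R[x]$-module, this yields $S\mbox{-gl.dim}(R[x])\leq n+1=S\mbox{-gl.dim}(R)+1$.

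I expect the main subtlety to be conceptual rather than computational: one must be careful about which ring each module is viewed over at each stage. In the sequence $0\rightarrow M[x]\xrightarrow{\varphi} M[x]\xrightarrow{\psi} M\rightarrow 0$ the middle and outer terms are all $R[x]$-modules and the maps are $R[x]$-homomorphisms, so the whole sequence lives over $R[x]$; but the bound $S\mbox{-}pd_{R[x]}(M[x])\leq n$ is obtained by viewing $M$ as an $R$-module, applying the global dimension bound over $R$, and transporting it up to $R[x]$ via Proposition \ref{s-pd-poly}. The one genuine obstacle is justifying that for an $R[x]$-module $M$ the underlying $R$-module $M$ tensored up, $M[x]=M\otimes_R R[x]$, is the object appearing in Lemma \ref{exact-s-poly}, and that $S\mbox{-}pd_R(M)\leq n$ really does hold for $M$ viewed over $R$; both are immediate once one notes that $S\mbox{-gl.dim}(R)=n$ bounds the $S$-projective dimension of \emph{every} $R$-module by Proposition \ref{w-g-projective}. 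With the two inequalities in hand, combining them gives the desired equality $S\mbox{-gl.dim}(R[x])=S\mbox{-gl.dim}(R)+1$.
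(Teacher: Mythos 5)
Your proposal follows essentially the same route as the paper's proof: the upper bound via the exact sequence $0\rightarrow M[x]\xrightarrow{\varphi} M[x]\xrightarrow{\psi} M\rightarrow 0$ of Lemma \ref{exact-s-poly} together with Propositions \ref{s-pd-poly} and \ref{spd-exact}(1), and the lower bound via Proposition \ref{s-pd-poly-3} applied to the non-zero-divisor $a=x$ in $R[x]$ with $R[x]/xR[x]\cong R$. The only organizational difference is that the paper first establishes that $S$-gl.dim$(R)<\infty$ if and only if $S$-gl.dim$(R[x])<\infty$, and then proves the two inequalities under the finiteness assumption; your two-inequality layout is otherwise the same argument.

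There is, however, one small hole in your handling of the case $S$-gl.dim$(R)=\infty$. You justify $S$-gl.dim$(R[x])=\infty$ by saying that arbitrarily long \emph{finite} $S$-projective dimensions over $R$ lift to $R[x]$ via Proposition \ref{s-pd-poly-3}(1); but that proposition carries the hypothesis $\overline{S}$-$pd_{\overline{R}}(M)<\infty$, and $S$-gl.dim$(R)=\infty$ can in principle be witnessed solely by a module $M$ with $S$-$pd_R(M)=\infty$ while all finite $S$-projective dimensions stay bounded (the classical analogue already occurs for $k[x]/(x^2)$, where every module of finite projective dimension is projective yet the global dimension is infinite). In that scenario your lifting step has nothing to lift. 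The repair is immediate and is exactly what the paper does: Proposition \ref{s-pd-poly} gives the equality $S$-$pd_{R[x]}(M[x])=S$-$pd_R(M)$ with no finiteness hypothesis, so any $R$-module of infinite (or arbitrarily large) $S$-projective dimension produces an $R[x]$-module of the same $S$-projective dimension, forcing $S$-gl.dim$(R[x])=\infty$. With that one-line substitution your proof is complete and matches the paper's.
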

\begin{proof} Let $M$ be an $R[x]$-module. Then, by Lemma \ref{exact-s-poly},  there is an exact sequence over $R[x]$:
$$0\rightarrow M[x]\rightarrow M[x]\rightarrow M\rightarrow 0.$$
By Proposition \ref{spd-exact}, Proposition \ref{spd-changring} and Proposition \ref{s-pd-poly},
\begin{center}
 $S$-$pd_R(M)\leq S$-$pd_{R[x]}(M)\leq 1+S$-$pd_{R[x]}(M[x])=1+S$-$pd_R(M)\qquad \qquad (\ast)$.
\end{center}
Thus if $S$-gl.dim$(R)< \infty$, then $S$-gl.dim$(R[x])<\infty$.

Conversely, if $S$-gl.dim$(R[x])< \infty$, then for any $R$-module $M$, $S$-$pd_R(M)=S$-$pd_{R[x]}(M[x])< \infty$ by Proposition \ref{s-pd-poly}. Therefore we have $S$-gl.dim$(R)< \infty$ if and only if $S$-gl.dim$(R[x])< \infty$.
Now we assume that both of these are finite. Then $S$-gl.dim$(R[x])\leq S$-gl.dim$(R)+1$ by $(\ast)$. Since $R\cong R[x]/xR[x]$, $S$-gl.dim$(R[x]) \geq S$-gl.dim$(R)+1$ by Proposition \ref{s-pd-poly-3}. Consequently, we have $S$-gl.dim$(R[x])=S$-gl.dim$(R)+1$.
\end{proof}
\begin{corollary}\label{s-wgd-poly-duo}
Let $R$ be a ring, $S$ a multiplicative subset of $R$. Then for any $n\geq 1$ we have
\begin{center}
$S$-gl.dim$(R[x_1,...,x_n])=S$-gl.dim$(R)+n$.
\end{center}
\end{corollary}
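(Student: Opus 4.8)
The plan is to prove Corollary \ref{s-wgd-poly-duo} by induction on $n$, using Theorem \ref{s-wgd-poly} as both the base case and the inductive step. The key observation is the standard isomorphism of rings
\begin{center}
$R[x_1,\dots,x_n]\cong R[x_1,\dots,x_{n-1}][x_n]$,
\end{center}
which lets me view the polynomial ring in $n$ variables as a polynomial ring in one variable $x_n$ over the base ring $R'=R[x_1,\dots,x_{n-1}]$. Crucially, the multiplicative subset $S$ sits inside each of these rings in a compatible way: $S\subseteq R\subseteq R'\subseteq R'[x_n]$, and the image of $S$ under the natural inclusion is exactly the copy of $S$ one uses when applying Theorem \ref{s-wgd-poly} to $R'$.

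First I would establish the base case $n=1$, which is precisely the content of Theorem \ref{s-wgd-poly}: $S$-gl.dim$(R[x_1])=S$-gl.dim$(R)+1$. For the inductive step, I assume the formula holds for $n-1$, namely $S$-gl.dim$(R[x_1,\dots,x_{n-1}])=S$-gl.dim$(R)+(n-1)$. Then I would apply Theorem \ref{s-wgd-poly} to the base ring $R'=R[x_1,\dots,x_{n-1}]$ and its one-variable polynomial extension $R'[x_n]$, obtaining
\begin{align*}
S\mbox{-gl.dim}(R[x_1,\dots,x_n]) &= S\mbox{-gl.dim}(R'[x_n]) \\
&= S\mbox{-gl.dim}(R')+1 \\
&= \big(S\mbox{-gl.dim}(R)+(n-1)\big)+1 \\
&= S\mbox{-gl.dim}(R)+n,
\end{align*}
where the first equality is the ring isomorphism, the second is Theorem \ref{s-wgd-poly} applied to $R'$, and the third is the inductive hypothesis.

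The only point requiring genuine care is verifying that the multiplicative subset appearing in each application of Theorem \ref{s-wgd-poly} is the \emph{same} $S$, so that the inductive hypothesis is literally applicable. Here one checks that under the identification $R[x_1,\dots,x_n]\cong R'[x_n]$, the multiplicative subset $S$ (viewed inside $R[x_1,\dots,x_n]$ as constants) corresponds precisely to $S$ viewed inside $R'[x_n]$ as constants of the base ring $R'$, and that this in turn is the same $S$ as the one inside $R'$ used by the inductive hypothesis; all three are the image of the original $S\subseteq R$ under the canonical ring inclusions, which are injective and preserve multiplicative subsets. I expect this bookkeeping to be the main (and only) obstacle, and it is entirely routine once the identification of $S$ across the tower of rings is made explicit. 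With that in hand, the induction closes immediately and delivers the stated formula for all $n\geq 1$.
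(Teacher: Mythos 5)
Your proof is correct and is precisely the argument the paper intends: the corollary is stated without proof as an immediate consequence of Theorem \ref{s-wgd-poly}, obtained by induction on $n$ via the identification $R[x_1,\dots,x_n]\cong R[x_1,\dots,x_{n-1}][x_n]$, exactly as you wrote. Your extra care about $S$ being the same multiplicative subset across the tower of rings is sound and consistent with the paper's remark that $S$ is naturally a multiplicative subset of $R[x]$.
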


\begin{acknowledgement}\quad\\
The author was supported by the Natural Science Foundation of Chengdu Aeronautic Polytechnic (No. 062026) and the National Natural Science Foundation of China (No. 12061001).
\end{acknowledgement}

https://arxiv.org/abs/2106.10441.

\end{document}